\newcommand\BeraMonottfamily{%
  \def\fvm@Scale{0.85}
  \fontfamily{fvm}\selectfont
}
\tikzstyle{every state}=[minimum size=12pt,inner sep=0pt]
\tikzstyle{randomPath}=[decorate,decoration={amplitude=1pt,segment length=2pt,random steps},very thick]
\tikzstyle{randomPath2}=[decorate,decoration={amplitude=2pt,segment length=6pt,random steps},very thick]
\renewcommand{\a}{\alpha}
\renewcommand{\d}{\delta}
\newcommand{\D}{\Delta}
\renewcommand{\l}{\lambda}
\renewcommand{\O}{\Omega}
\newcommand{\s}{\sigma}
\renewcommand{\SS}{\Sigma}
\newcommand{\F}{\mathbb F}
\newcommand{\R}{{\mathbb R}}
\newcommand{\Z}{{\mathbb Z}}
\newcommand{\Int}{{\rm Int}}
\newcommand{\Sym}{{\rm Sym}}
\newcommand{\tar}{{\rm tar}}
\newcommand{\aux}{{\aux}}
\newtheorem{theorem}{Theorem}[section]
\newtheorem{proposition}[theorem]{Proposition}
\newtheorem{lemma}[theorem]{Lemma}
\newtheorem{corollary}[theorem]{Corollary}
\newtheorem{fact}[theorem]{Fact}
\theoremstyle{definition}
\newtheorem{definition}[theorem]{Definition}
\newtheorem{assumption}[theorem]{Assumption}
\theoremstyle{remark}
\newtheorem{example}[theorem]{Example}
\newcommand{\til}[1]{{\widetilde{#1}}}
\newcommand{\prop}{{\rm P}}
\newcommand{\aut}[1]{{\mathcal #1}}
\newcommand{\auta}{\aut{A}}
\newcommand{\mot}[1]{{\mathbf {#1}}}
\newcommand{\pres}[1]{\langle{#1}\rangle}
\newcommand{\presp}[1]{\pres{{#1}}_{+}}
\newcommand{\gauta}{\pres{\auta}}
\newcommand{\sect}[2]{{{#1}_{|#2}}}
\newcommand{\act}[2]{{#1}.{#2}}
\newcommand{\perm}[1]{\pi{\left(#1\right)}}
\newcommand{\wrpr}[2]{{\left\langle#1\right\rangle}{#2}}
\newcommand{\wrprp}[3]{{{\left\langle#1\right\rangle}^{#2}}{#3}}
\newcommand{\QQ}{Q}
\newcommand{\XX}{\Sigma}
\newcommand{\auttuple}{\left(\QQ,\XX,\delta,\rho\right)}
\newcommand{\grgrow}{\gamma}
\begin{document}


\date{}

\title{Numerical upper bounds on growth of automata groups}

\author[1]{J\'er\'emie Brieussel}

\author[1,2,3]{Thibault Godin}

\author[1]{Bijan Mohammadi}

\affil[1]{Universit\'e de Montpellier -- Institut Montpelli\'erain Alexander Grothendieck}

\affil[2]{Universit\'e de Lorraine -- Institut Elie Cartan de Lorraine}

\affil[3]{University of Turku -- Department of Mathematics}


\maketitle

\begin{abstract}
The growth of a finitely generated group is an important geometric invariant which has been studied for decades. It can be either polynomial, for a well-understood class of groups, or exponential, for most groups studied by geometers, or intermediate, that is between polynomial and exponential. Despite recent spectacular progresses, the class of groups with intermediate growth remains largely mysterious. Many examples of such groups are constructed using Mealy automata. The aim of this paper is to give an algorithmic procedure to study the growth of such automata groups, and more precisely to provide numerical upper bounds on their exponents.

  Our functions retrieve known optimal bounds on the famous first Grigorchuk group. They also improve known upper bounds on other automata groups and permitted us to discover several new examples of automata groups of intermediate growth. All the  algorithms described are implemented in~\texttt{GAP}, a language dedicated to computational group theory.
\end{abstract}

\section{Introduction}

The aim of this paper is to develop tools to analyse the growth of automata groups, and more precisely to provide numerical upper bounds when the growth is intermediate.

Groups are  fundamental objects in mathematics, which can be seen as an abstract encoding of the notion of symmetry. They are used in various domains of discrete mathematics and combinatorics, e.g.  graph isomorphism~\cite{Hel18}), cryptography~\cite{Crypto}, physics~\cite{Physics}, chemistry~\cite{Chimie} and even biology~\cite{EFV14}. The ideal goal of group theory is to describe and classify all the possible behaviours that a group can exhibit. A landmark result in this direction is the~\emph{classification of finite simple groups}~\cite{Atlas85}. For infinite groups, various interesting phenomena can appear. For instance a group can be infinite and yet have all its elements of finite order. Such groups are called Burnside groups in homage to Burnside who asked whether they exist in 1902. First examples were given in 1964 by Golod and Shafarevich~\cite{Gol64,GoSh64}. Subsequent examples were given in 1968 by Novikov and Adyan~\cite{NA68} and in 1972 by Aleshin~\cite{Ale72}, the later using automata groups. 

An geometric  way to classify  groups is through their growth: let \(G\) be a finitely generated group and \(S\) a generating set of \(G\). The \emph{Cayley graph} of \((G,S)\) is the graph \(\Gamma_{G,S} \) whose vertices are the elements of \(G\) and such that there is an oriented edge from \(g\) to \(h\) if there exists \(s \in S\) such that \(g.s=h\). Such an edge is naturally labelled by $s$. Examples of such graphs are depicted in Fig.~\ref{fig-Cayley}.
The \emph{growth} function of a group \(G\) with respect to \(S\) is the function $\gamma_{G,S}(\ell)$
that counts the number of elements in a ball of given radius~\(\ell\) in~ \(\Gamma_{G,S} \). This notion was introduced by Svarc~\cite{Svarc} and Milnor~\cite{Mil68int} in relation with Riemannian geometry. They observed that the growth of a group is essentially independent of the generating set (see Lemma~\ref{lem:growthindegen}). Obviously the growth is bounded if and only if the group is finite. Moreover abelian (i.e. commutative) groups have polynomial growth. For instance, integer lattices $\Z^d$ have polynomial growth of degree $d$. This is still the case for the (slightly bigger) class of nilpotent groups~\cite{Wolf68}, and a celebrated result of Gromov asserts that there are essentially no other such groups, since polynomial growth implies nilpotency up to taking a finite index subgroup~\cite{Gro81}. 

\begin{figure}[h!]
\scalebox{0.6}{
	\begin{tikzpicture}[->,>=latex,node distance=25mm,scale=1]
	
	\node[black] (1) [] {$(0,0)$};
  	\node[black]  (2) [right of=1] {$(1,0)$};
  	\node[black]  (3) [right of=2] {$(2,0)$};
  	\node[black]  (4) [above of=1] {$(0,1)$};
   	\node[black]  (5) [above of=4] {$(0,2)$}; 	

  	\node[black]  (-2) [left of=1] {$(-1,0)$};
  	\node[black]  (-3) [left of=-2] {$(-2,0)$};
  	\node[black]  (-4) [below of=1] {$(0,-1)$};
   	\node[black]  (-5) [below of=-4] {$(0,-2)$}; 	
   	
   	\node[black]  (a) [right of=4] {$(1,1)$};
  	\node[black]  (b) [below of =2] {$(1,-1)$};
  	\node[black]  (c) [left of=4] {$(-1,1)$};
   	\node[black]  (d) [below of=-2] {$(-1,-1)$};

	\path 
	(1) edge node[above]{$a$}(2)
	(2)  edge node[above]{$a$} (3)
	(1) edge node[above]{$a^{-1}$} (-2)
	(-2) edge node[above]{$a^{-1}$}(-3)
	(4) edge node[above]{$a$} (a)
	(4) edge node[above]{$a^{-1}$} (c)
	(-4) edge node[above]{$a$} (b)
	(-4) edge node[above]{$a^{-1}$}(d)

	(1) edge node[right]{$b$} (4)
	(1) edge node[right]{$b^{-1}$} (-4)
	(4) edge node[right]{$b$} (5)
	(-4) edge node[right]{$b^{-1}$}(-5)
	
	(2) edge node[right]{$b$} (a)
	(-2) edge node[right]{$b^{-1}$} (c)
	(2) edge node[right]{$b^{-1}$} (b)
	(-2) edge node[right]{$b$}(d)
	
	;
	

\draw[thick,dotted,-] (2.5,2.8) -- (2.5,4.25);	
\draw[thick,dotted,-] (3.1,2.5) -- (4.25,2.5);		
\draw[thick,dotted,-] (-2.5,2.8) -- (-2.5,4.25);	
\draw[thick,dotted,-] (-3.1,2.5) -- (-4.25,2.5);		
\draw[thick,dotted,-] (2.5,-2.8) -- (2.5,-4.25);	
\draw[thick,dotted,-] (3.1,-2.5) -- (4.25,-2.5);		
\draw[thick,dotted,-] (-2.5,-2.8) -- (-2.5,-4.25);	
\draw[thick,dotted,-] (-3.1,-2.5) -- (-4.25,-2.5);		

{\draw[red,thick,dashed,-] (3.25,0)--(0,3.25)--(-3.25,0)--(0,-3)--(3.25,0);}

\node[black] (gamma1) at (1,1.75) {\color{red}$\gamma(1)$};

{\draw[blue,thick,dashed,-] (5.75,0)--(0,5.75)--(-5.75,0)--(0,-5.75)--(5.75,0);}

\node[black] (gamma2) at (3.25,-3.25) {\color{blue}$\gamma(2)$};

\end{tikzpicture}
	}
	\hspace{0.8cm}
	\scalebox{0.35}{
\begin{tikzpicture}
\draw l-system [l-system={cayley, axiom=[F] [+F] [-F] [++F], step=5cm, order=4}];
\end{tikzpicture}
}

	\caption{On the left: the Cayley graph of \(\Z^2\) with generators \(a^{\pm1}=(0,\pm1)\) and \(b^{\pm1}=(\pm1,0)\) together with the balls of radii \(1\) et \(2\). On the right: the ball of radius \(5\) in the (unlabelled) Cayley graph of the free group on 2 generators.}\label{fig-Cayley}
\end{figure}
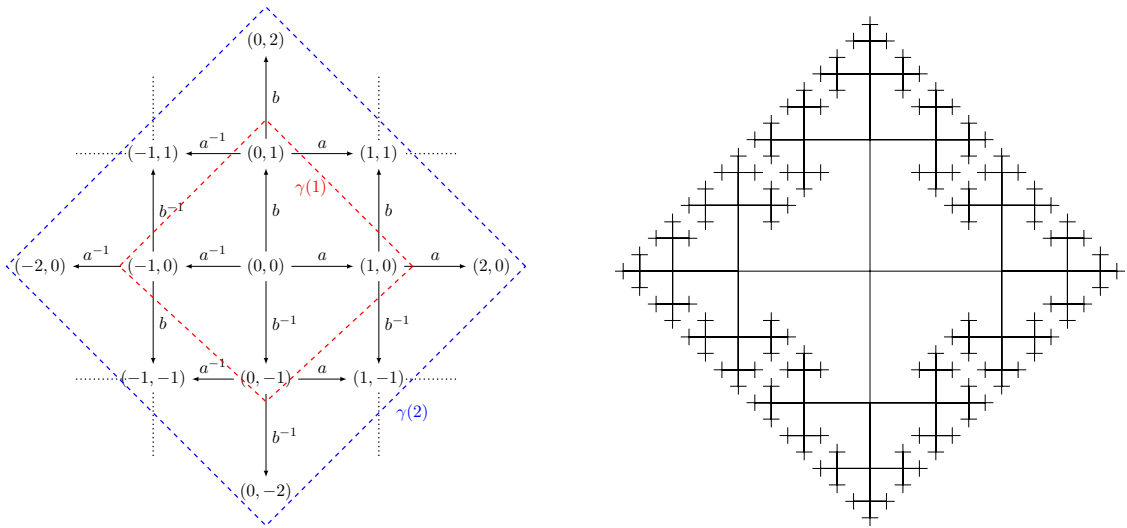

On the other hand, free groups, whose Cayley graphs are essentially infinite regular trees, and non-elementary hyperbolic groups have exponential growth. Among several classes of groups, the growth is either polynomial or exponential. This dichotomy holds for instance among solvable groups by Milnor and Wolf~\cite{Mil68Note,Wolf68}, or among linear groups, i.e. groups of invertible matrices, by the Tits alternative~\cite{Tits72}. We refer the reader to more complete introduction to this topic in~\cite{dlHar00,GrPa08,Man11,Gri14}. 

Motivated by such observations, Milnor asked in 1968 if this picture was complete or if there exists groups having growth functions between polynomial and exponential, henceforth called \emph{groups of intermediate growth}~\cite{Mil68}. This question was positively answered by Grigorchuk in the eighties:

\begin{theorem}[\cite{Gri83,Gri85},\cite{Bar98,ErZh18}]
	The group generated by the Mealy automaton of \cref{fig:Mealy} has intermediate growth. Its growth function is essentially equivalent to~\(\exp(\ell^{0.7674\dots})\).
\end{theorem}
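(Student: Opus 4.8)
The plan is to exploit the self-similar structure of the group. Write \(G=\langle a,b,c,d\rangle\) for the group of the automaton of \cref{fig:Mealy}, acting on the infinite rooted binary tree: \(a\) is the rooted automorphism swapping the two maximal subtrees, and \(b,c,d\) obey the wreath recursions \(b=(a,c)\), \(c=(a,d)\), \(d=(1,b)\), together with \(a^2=b^2=c^2=d^2=bcd=1\). The first-level stabiliser \(\Stab_G(1)\) has index \(2\) in \(G\), and the section map \(\psi\colon\Stab_G(1)\to G\times G\), \(g\mapsto(\sect{g}{0},\sect{g}{1})\), is an injective homomorphism. First I would check that \(G\) is infinite: a direct computation on the generators (for instance \(\psi(d)=(1,b)\), \(\psi(aba)=(c,a)\), \(\psi(aca)=(d,a)\), \(\psi(ada)=(b,1)\), together with \(\psi(b)=(a,c)\), \(\psi(c)=(a,d)\)) shows that each coordinate projection of \(\psi(\Stab_G(1))\) is onto \(G\); hence if \(G\) were finite we would get \(\card{G}=2\,\card{\Stab_G(1)}=2\,\card{\psi(\Stab_G(1))}\ge 2\,\card{G}\), absurd. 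In particular \(G\) is not virtually nilpotent, so by Gromov's theorem it cannot have polynomial growth — but that only gives super-polynomial growth, and the sharp statement requires the quantitative bounds below, which come from the very same recursion.

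For the \textbf{upper bound} I would establish a \emph{length-contraction inequality}. Fix a word metric \(\card{\cdot}\) on \(G\) — following Bartholdi, one with a carefully chosen \emph{weight} on the letters \(b,c,d\). Then there are constants \(C>0\) and \(\lambda_0>2\) such that every \(g\in\Stab_G(1)\) with \(\psi(g)=(g_0,g_1)\) satisfies
\[
\card{g_0}+\card{g_1}\ \le\ \frac{2}{\lambda_0}\,\card{g}+C .
\]
The proof is combinatorial: a geodesic word representing \(g\) alternates between \(\{a\}\) and \(\{b,c,d\}\); rewriting it through \(\psi\) and the relations, the letters \(a\) get absorbed in pairs whenever one crosses between the two subtrees, while among \(b,c,d\) only \(d=(1,b)\) fails to shorten, and one estimates the finitely many worst local patterns. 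Grigorchuk already does this (passing to level \(3\) to make the contraction effective); Bartholdi's improvement is to optimise the weight on \(b,c,d\), the optimum being governed by the positive real root \(\lambda_0\approx 2.4675\) of \(x^3-x^2-2x-4\). Iterating the inequality down \(k\) levels decomposes a ball of radius \(\ell\) into \(2^k\) pieces, each a ball of radius \(\le \ell/\lambda_0^{k}+C'\), so that
\[
\gamma_{G,S}(\ell)\ \le\ 2^{2^{k}}\,\gamma_{G,S}\!\left(\frac{\ell}{\lambda_0^{k}}+C'\right)^{2^{k}} .
\]
Taking \(k\approx\log\ell/\log\lambda_0\) makes the inner radius bounded and yields \(\gamma_{G,S}(\ell)\le\exp(c\,\ell^{\alpha})\) with \(\alpha=\log 2/\log\lambda_0=0.7674\dots\); by \cref{lem:growthindegen} the choice of (weighted) generating set is immaterial for the equivalence class of \(\gamma_G\).

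For the \textbf{lower bound} I would run the contraction backwards, using that \(G\) is \emph{regular branch}: there is a finite-index normal subgroup \(K\trianglelefteq G\) with \(K\times K\subseteq\psi(\Stab_G(1))\). A pair \((g_0,g_1)\in K\times K\) with \(\card{g_i}\le m\) then lifts to some \(g\in\Stab_G(1)\) with \(\card{g}\le D\,m\) for a fixed constant \(D\), and distinct pairs give distinct \(g\); hence \(\gamma_G(D m)\gtrsim\gamma_K(m)^2\asymp\gamma_G(m)^2\). Combined with \(\gamma_G\) unbounded, the functional inequality \(\gamma_G(\ell)\gtrsim\gamma_G(\ell/D)^2\) already forces at least super-polynomial growth. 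Obtaining the \emph{matching} exponent \(\alpha=0.7674\dots\) from below is the substantially more difficult result of Erschler–Zheng: instead of the single branch subgroup \(K\) they use a delicately chosen sequence of subgroups together with random-walk/entropy estimates to produce enough distinct short elements to saturate \(\alpha\).

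The \textbf{main obstacle} is closing the gap between the two sides. The naive versions of the two steps above (essentially Grigorchuk's original argument) give intermediate growth but with an upper exponent close to \(1\) and a lower exponent around \(1/2\); pinning both down to the single value \(\alpha=\log 2/\log\lambda_0\) is the real content. For the upper bound this is Bartholdi's optimisation over weighted word metrics, which converts the combinatorics of the local rewriting into the cubic \(x^3-x^2-2x-4=0\); for the lower bound it is the Erschler–Zheng construction, whose mechanism is quite different from — and far more delicate than — the elementary branch inequality. Reconciling the two to the \emph{same} exponent is where the numerical value \(0.7674\dots\) ultimately comes from, and neither half is a routine calculation.
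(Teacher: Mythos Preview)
The paper does not give its own proof of this theorem; it is quoted as a known result from the cited literature, and the only relevant machinery the paper develops is the upper-bound contraction method of \cref{MP} and \cref{prop-contract}, which is precisely Bartholdi's weighted-contraction argument you describe. Your sketch of the upper bound is therefore the same approach the paper exposes (eggs and weighted norms are just a repackaging of your ``alternating word plus weight on $b,c,d$'' picture), and your treatment of the lower bound correctly defers to Erschler--Zheng, as the paper does.

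One genuine slip: from ``$G$ is infinite'' you conclude ``in particular $G$ is not virtually nilpotent''. That inference is false as stated---$\Z$ is infinite and nilpotent. To rule out virtual nilpotence you need an extra ingredient, for instance that $G$ is an infinite $2$-group (finitely generated nilpotent torsion groups are finite), or the branch inequality $\gamma_G(Dm)\gtrsim\gamma_G(m)^2$ you write later, or the argument the paper gives in \cref{sec:superpoly} via non-finite-presentability. Since you ultimately rely on the quantitative lower bound anyway, this gap does not damage the final conclusion, but the sentence as written is not justified.
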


The computation of the growth function of this group is challenging. Grigorchuk proved that it is between $e^{\sqrt{n}}$ and $e^{\ell^{0.993}}$ in \cite{Gri85}. Bartholdi gave an accurate upper bound~\cite{Bar98}. Only recently Erschler and Zheng provided a matching (up to logarithmic factor) lower bound~\cite{ErZh18}.

The group generated by the Mealy automaton depicted in \cref{fig:Mealy} is now called the \emph{first Grigorchuk group}.  Mealy automata are a special class of transducers (automata with input and output) whose functions induced by states on words generate groups. These have been ubiquitous in growth related questions since the eighties: most constructions of groups of intermediate growth use them~\cite{Gri85,Grigorchuk1986,FaGu91,BaSu01,BartholdiNonU2003,Brieussel2009,BartholdiErschler2012,Bri17} they are a basic tool to construct groups of given intermediate growth~\cite{BaEr14,Bri14} and they were also used by Wilson~\cite{Wil04} to exhibit groups having non-uniform exponential growth (answering a question of Gromov). Only recently Nekrashevych gave a construction of  groups of intermediate growth not based on the use of Mealy automata~\cite{Nek18}.

	This new construction is very different since Nekrashevych's groups are \emph{simple} (i.e. have no non-trivial normal subgroups) whereas automata groups are residually finite (in particular, they have infinitely many normal subgroups of finite index).

 In spite of this list of examples, few things are actually known about the growth of general automata groups: some restricted classes of automata generate only groups with bounded or exponential growth~\cite{Kli17,Olu17}. It contrasts with the usual amenability of automata groups to algorithmic questions~\cite{Bar16,AKLMP12,KMP12,BGKP18,GAP,FR,AutomGrp}.
\begin{definition} A number $\a \in [0,1]$ is called an \emph{upper bound exponent} of the group $G$ if for any generating set $S$ there exists $c>0$ such that $\gamma_{G,S}(\ell) \leq \exp(c\ell^\alpha)$ for all $\ell$.
\end{definition}

It is an open question whether there exists groups with growth exponent smaller than $0.7674...$. Grigorchuk's gap conjecture asserts that there are no groups with growth below $\exp(\sqrt{\ell})$. So far, only the groups in \cite{Gri85,Grigorchuk1986,BaSu01,Bri17} (and related constructions) are known to have an exponent $<1$. It is likely that the groups in \cite{FaGu91,BartholdiNonU2003,Brieussel2009} have intermediate growth but exponent $1$ as their growth rate could be essentially $\exp(\frac{\ell}{\log(\ell)})$. Such a behavior is known to hold for the group $G_{(01)^\infty}$ also introduced by Grigorchuk~\cite{Gri85} and studied by Erschler~\cite{Erschler2004boundary}. Nekrashevych conjectures that some groups in \cite{Nek18} have growth exponent below $0.7674...$, but it is not yet proved that they have exponent $<1$. Regarding automata groups, the only accurate computations of exponents concern Grigorchuk groups acting on the binary tree. They appeared in a recent work of Erschler and Zheng~\cite{ErZh18}. 

The present work aims at giving explicit numerical upper bounds on the growth of automata groups and to yield new examples of such groups.\\

{\bf Proposed approach:}
All known examples of automata groups of intermediate growth belong to the class of \emph{(strongly) contracting} automata. More precisely, the contraction method described in Section \ref{sec:contcrit} and originaly due to Grigorchuk~\cite{Gri83,Gri85,Grigorchuk1986} is the only strategy known so far to obtain non-trivial upper bounds for growth exponents.

An element~\(g\) of an automaton group~\(\gauta\) acting on~\(\XX^*\) can be recursively described  as~\(g=\wrpr{\sect{g}{x_1},\hdots, \sect{g}{x_d}}{\perm{g}}\) where~\(\perm{g}\) is a permutation of~\(\XX\) and the~\(\sect{g}{x}\) are elements of the group called the~\emph{sections} of~\(g\). We say that~\(g\) is \emph{(strongly) contracting} if the length of~\(g\) is strictly smaller than the sum of the length of its section, and we set~\(\eta(g)\) to be their ratio. 

The contraction method ensures that if we find a suitable finite set~\(\D\) containing only contracting elements, the group has subexponential growth bounded above by~\(\exp(\ell^\alpha)\), where~$\alpha$ depends explicitly on the ratio~$\eta$ and the size of the alphabet. Suitable sets have to be \emph{essentially geodesic generating}, in short eggs. This property is defined in Section~\ref{sec:eggs}.

In practice, given a target $\eta$, we try to find such an egg~\(\D\). For this, we focus on an over-approximation of~\(\gauta\) in the sense that we perform our computations in a group~\(\tilde{G}\) with the same generators as~\(\gauta\) but only a finite number of relations, so that~\(\gauta\) is a quotient of~\(\tilde{G}\), and explore the Cayley graph of the latter group. The basic idea is to implement a graph traversal from the identity, stopping the exploration of a branch when we have found an element which is contracting enough or which has already been visited. Using a tight over-approximation allows to accelerate computations but in balance we have to keep a good understanding of the Cayley graph structure in order to guarantee that the computed bound is still relevant in the original automaton group -- see \cref{sec:stabquo}. This first (semi-)algorithm already presents an interesting combinatorics as the results may vary substantially according to the target. 

In a second time, we introduce weights in computations, as it was successfully done by Bartholdi for the first Grigorchuk group~\cite{Bar98}. Weights are positive functions~$\pi$ on the generating set $S$, and we declare the length of an edge to equal the weight of the generator that labels it. This gives a new, non-uniform and possibly non-symmetric, metric on the Cayley graph. It is well-known  that the growth rate is not modified by the introduction of weights, nor by change of generating set -- see Lemma~\ref{lem:growthindegen}. The upper bound~$\alpha$ obtained by the contraction method depends drastically on the weight $\pi$  as can be seen on \cref{table:resultGri,table:resultNew}, but by invariance the result applies to all growth functions~$\gamma_{G,S,\pi}(\ell)$.

A crucial issue is to find the best possible weights for a given generating set. For this, we fix an essentially geodesic generating set $\Delta$ and optimize the returned contraction parameter $\eta(\pi)$ under the condition that $\Delta$ remains an egg for $\pi$, which is the case under mild triangular conditions described in \cref{sec:stabweight}. In practice, we are lead to minimize a function of the form $\max_{w\in \Delta}(f_w)$, where the functions $f_w$ are rational (homogeneous) functions on $\R^d$, on a subdomain of $[0,1]^d$ bounded by linear inequalities. For this, we use a numerical algorithm  \texttt{wmo} based on a generalized gradient method described in \cite{Moh07, MoRe09} -- see~\cref{sec:wmo}.

Finally, using the weight-metric-optimization algorithm \texttt{wmo}, we develop a dynamical procedure trying to optimize the contraction through both the graph-search of an egg $\Delta$ and the choice of the weights $\pi$. 
Notice that these two possible directions of optimization, by graph exploration and weights choice, strongly interact. Hence, a careful tuning of the parameters might be needed for some groups. On the other hand, pre-set routines often give rather good results-- see~\cref{table:resultGriopt,table:resultNew}.\\

\begin{table}[p]
\centering
	\begin{tabular}{|c|c|c|c|c|}
\cline{1-5}
level &weights chosen&target~\(\eta\) & \multicolumn{1}{ c| }{obtained~\(\eta\)} & obtained $\alpha$ 
\\ \cline{1-5}
\multicolumn{1}{ |c  }{\multirow{3}{*}{1} } &
\multicolumn{1}{ |c| }{\([.25,.25,.25,.25]\)} & \(.99\) & does not end & does not end 
\\ \cline{2-5}
\multicolumn{1}{ |c  }{}                        &
\multicolumn{1}{ |c| }{\([.25,.375,.25,.125]\)} & \(.99\) & .875 & .8385
\\ \cline{2-5}
\multicolumn{1}{ |c  }{}                       & 
\multicolumn{1}{ |c| }{\([.305061,.34747,.223839,.123631]^*\)} & \(.99\) & \(.8106^*\) &
\(.7675^*\)    
\\ \cline{1-5}

\multicolumn{1}{ |c  }{\multirow{4}{*}{2} } &
\multicolumn{1}{ |c| }{\([.25,.25,.25,.25]\)} & \(.99\) & does not end & does not end
\\ \cline{2-5}

\multicolumn{1}{ |c  }{}                        &
\multicolumn{1}{ |c| }{\([.25,.375,.25,.125]\)} & \(.99\) & ~\(.9231\) & \(.9455\)
\\ \cline{2-5}
\multicolumn{1}{ |c| }{} &
\multicolumn{1}{ |c| }{\([.305061,.34747,.223839,.123631]^*\)
} & \(.99\) & ~\(.9152\) & \(.9400
\)
\\ \cline{2-5}
\multicolumn{1}{ |c| }{} &\multicolumn{1}{ |c| }{\([.305061,.34747,.223839,.123631]^*\)} & \(.75\) & ~\(.7497\) & \(.8280\)
\\ \cline{2-5}
\multicolumn{1}{ |c| }{} &\multicolumn{1}{ |c| }{\([.305061,.34747,.223839,.123631]^*\)} & .68  & .6800  & .7824
\\ \cline{1-5}

\multicolumn{1}{ |c|  }{\multirow{3}{*}{3} }&
\multicolumn{1}{ |c| }{\([.25,.25,.25,.25]\)
} & \(.99\) & ~\(.8889\) & \(.9437
\)
\\ \cline{2-5}&
\multicolumn{1}{ |c| }{\([.305061,.34747,.223839,.123631]^*\)
} & \(.99\) & .8954  & .9496
\\ \cline{2-5} &
\multicolumn{1}{ |c| }{\([.305061,.34747,.223839,.123631]^*\)
} & \(.75\) & .7418  & .8745
\\ \cline{2-5} &
\multicolumn{1}{ |c| }{\([.305061,.34747,.223839,.123631]^*\)
} & \(.58\)  & .5800  & .7924
\\ \cline{1-5}

\end{tabular}
	\caption{The first Grigorchuk group: upper bounds computed by function \texttt{IsSubExp\_rec} without optimization.  The stars~\(*\) mark optimal and almost optimal data. The optimal weights are taken from Bartholdi~\cite{Bar98} and have been normalized to sum up to $1$.  Levels are defined at the end of~\cref{seq:Mealy}. }\label{table:resultGri}
\end{table}

\begin{table}[p]
\centering
	\begin{tabular}{|c|c|c|c|c|c|}
\cline{1-6}
level & update &target~\(\eta\)& obtained~\(\eta\)  & obtained weights & obtained \(\alpha\)
\\ \cline{1-6}

\multirow{1}{*}{1}  &
\(4\) &\(.90\)&~\(.8107^*\) &~\([ .3052, .3475, 
  .2243, .1236]^*
\) & \(.7676^*\) 
\\ \cline{1-6}
\multirow{2}{*}{2}  & \(4\)& \(.90\) & ~\(.8121\)  &~\([.3072,.3465,.1229,.2236]\) & \(.8063
\)
\\ \cline{2-6}& \(4\)& \(.72\) & ~\(.7166\)  &~\([.3069,.3466,.2068,.1399]\) & \(.9464\)
\\ \cline{1-6}
\multirow{2}{*}{3}  & \(4 \) or \(10\)& \(.90\) & ~\(.8889\)  &~\([.25,.25,.25,.25]\) & \(.9464
\)
\\ \cline{2-6}

 &\(4\) & \(.65\) & \(.6477\)  &  \( [ .4789, .2606, .2059,.0548]
\) & \(.8273\)
\\ \cline{2-6}

 &\(10\) & \(.63\) & \(.6287\)  &  \( [ .4004, .2999, .2345,.0654]
\) & \(.8176\)
\\ \cline{1-6}
\end{tabular}
	\caption{The first Grigorchuk group: upper bounds computed by function \texttt{IsSubExp\_opt} with optimization of  weights every update rounds, starting with uniform weights. The stars~\(*\) mark optimal and almost optimal data.}\label{table:resultGriopt}
\end{table}

\begin{table}[p]
\centering
	\begin{tabular}{|c|c|c|c|c|}

\cline{1-5}
Automaton &obtained~\(\eta\) & obtained \(\alpha\) & obtained weights & comments

\\ \cline{1-5}
\multirow{2}{*}{Fig.~\ref{fig:T1}}  & \multirow{2}{*}{\(.6450\)} & \multirow{2}{*}{\(.8034\)}  &\multirow{2}{*}{\([ .3352, .1899, .1899, .2849 ]\) }&  New automaton group
\\ 
&&&&weights obt. by optimization

\\ \cline{1-5}
\multirow{2}{*}{Fig.~\ref{fig:mNote}, from \cite{Bri17}} &
\(.8188\) &  \(.9123\) &\([1.,0.,0.]\)&prev. bound from~\cite{Bri17}

\\ \cline{2-4}
                        &\(.8300\) & \(.9178\) & \([.25,.25,.25,.25]\) &  was~\(.9396\)

 \\ \cline{1-5}
\multirow{2}{*}{Fig.~\ref{fig:Y}}  &\multirow{2}{*}{\(.8398\)} & \multirow{2}{*}{\(.9177\)} &\multirow{2}{*}{\([ .3785, .2655, .3561 ]\)}&  New automaton group

\\ 
 & & & &weights obt. by optimization

\\ \cline{1-5}

\end{tabular}
	\caption{Some new upper bounds computed.}\label{table:resultNew}
\end{table}

{\bf Previous works:}
As mentionned above, the underlying automaton allows the class of automata groups to be efficiently studied from an algorithmic point of view. For instance the word problem can be efficiently solved as well as the order problem~\cite{BBSZ13} or the Engel problem~\cite{Bar16} in some specific classes. Two main packages have been developed to this end: \texttt{AutomGrp}~\cite{AutomGrp} by Muntyan and Savchuk and \texttt{FR}~\cite{FR} by Bartholdi. Both are implemented in the \texttt{GAP} language~\cite{GAP}, specialized in computational algebra.

The situation is more contrasted for growth related question:
a first experimental approach was attempt by Reznykov and Sushchanskii~\cite{ReSu06}, where  the growth function up to a certain length  is computed, which allows the author to deduce conjectures from the observed values. Using this approach, they exhibit the smallest automaton semigroup with intermediate growth. Yet, no exact result can be inferred from this work and, as mentionned in~\cite{Res03}, the first terms of the growth function can be misleading. For instance, it follows from \cite{BaEr14, Bri14} that the growth may look exponential on large balls and still be intermediate at even larger scales. Notice that an efficient algorithm to compute the growth of automata groups, based on minimization, is provided by~\cite{KMP12}.

In \texttt{FR}, no function trying to determine the growth behaviour of an automaton group is provided (the functions computing the polynomial or exponential growth  degree are dealing with a different notion, that of activity introduced by Sidki~\cite{Sid00}).

In the package \texttt{AutomGrp}, a function testing if the automaton generates a group of subexponential growth is proposed. The tactic used is also based on  the contraction method and can be seen as the simplest version of  our algorithm: it looks if there exists a length~\(\ell\) such as every words of length~\(\ell\) are contracting. Our present approach refines this in several ways. 

First we no longer look for elements of the same length, but rather use the concept of essentially geodesic generating sets, introduced in Section~\ref{sec:eggs}. It permits to reduce drastically the number of elements whose contraction is required in order to assure subexponentiality. This allows our algorithm to detect groups of intermediate growth that are missed by~\texttt{AutomGrp}, such as the group generated by the automaton~\cref{fig:mNote}. 

Secondly, we use weights in the computation of contraction and actually implement an algorithm to numerically optimize them. Note that our dynamical procedures also provide an explicit bound when the procedure stops (but the bound is possibly trivial).  We stress that our code relies on some functions developed in \texttt{AutomGrp}, the most important being the rewriting system for groups.\\

{\bf Results obtained:}
Our approach produces several positive results:
\setlist[description]{font=\normalfont\itshape\textbullet\space}
\begin{description}
	\item[On the first Grigorchuk group:] our algorithm allows to retrieve numerically the optimal bound and weight system, see~\cref{table:resultGri,table:resultGriopt}.  Optimality of $\alpha=0.7674...$ is known from~\cite{ErZh18}. Throughout the paper, the numerical values we obtain are rounded up to the fourth digit.
	\item[Improvement of upper bounds:]  for the automata  in \cite{Bri17}, depicted in ~\cref{fig:mNote}, we improve the best known upper bound from $0.9396$ to $0.9123$, see~\cref{table:resultNew}.
	\item[New groups of intermediate growth:] we discovered several new automata generating group of intermediate growth (see \cref{fig:T1,fig:Y} and~\cref{table:resultNew}.). It should be noticed that these are not just twists of known automata but rather seem to belong to new families.
\end{description}

{\bf Perspectives:}
Our approach can be naturally extended to (homogeneous) spinal groups~\cite{BaSu01} which generalize Grigorchuk groups and are known to be of intermediate growth. Moreover, the new groups of intermediate growth described in~\cref{fig:T1,fig:mNote,fig:Y} seem to belong to large families of groups with intermediate growth. A systematic study of these families is in order. One aim of the present paper was to provide tools for this study.

From a computational point of view, we can aim for two main improvements: firstly, the procedures could be implemented in an other, more efficient, language. Secondly, the procedure of exploration is tractable to parallelization, which would greatly improve the performances of our methods. 

Finally, we stress that many algorithmic problems are undecidable for automata groups, and that it is unknown if there exists an algorithm proving that an automaton is contracting. On the other hand, some decidable classes of automata, e.g. the subclass of bounded automata~\cite[Theorem~6.5]{Nek05} which contains most known automata groups of intermediate growth, force the generated group to be contracting in a sense. It would be interesting to explore these classes with our algorithms. Moreover, one can wonder if the growth rate is decidable in the class of bounded automata, which contains groups of all types of growths.\\

{\bf Organization of the paper.} Basic facts about Mealy automata and their associated groups are briefly presented in \cref{sec:aut}, together with the examples under focus in the paper. Word metric and growth rates are defined in \cref{sec:growth}. The crucial notion of eggs (essentially geodesic generating sets) is defined and studied in \cref{sec:eggs}. The strong contraction method for subexponential growth is described in \cref{sec:contcrit}. \cref{sec:algo} is devoted to the description of  (semi)-algorithms that provide upper bounds. Experimental data obtained with them and comments appear in \cref{sec:data}. The last \cref{sec:superpoly} gives a criterion ensuring that the groups we consider do not have polynomial growth. \cref{app:fig} gathers the pictures of diagrams and Schreier graphs of all automata groups studied here. The \texttt{GAP} code is available at \texttt{https://www.irif.fr/\textasciitilde godin/automatongrowth.html}.\\

{\bf Acknowledgements.} We wish to thank Laurent Bartholdi and Dyma Savchuk for useful comments about \texttt{GAP} implementation.

J.B. and Th.G. were partially supported by ANR-16-CE40-0022-01 AGIRA. Th.G. was partially supported by the Academy of Finland grant 296018.

\section{Automata groups}\label{sec:aut}


We briefly review definitions and elementary facts about Mealy automata and self-similar group. The reader is refered to~\cite{Nek05,Zuk06} for comprehensive description.

\subsection{Mealy automata}\label{seq:Mealy}
A \emph{Mealy automaton} is a complete deterministic letter-to-letter transducer \(\auta = \auttuple\), where \(\QQ\) and \(\XX\) are finite sets respectively called the the \emph{stateset} and the \emph{alphabet}, and $\delta= (\delta_i : Q \to \QQ)_{i \in \XX}$, $\rho = (\rho_q : \XX \to \XX )_{q \in \QQ}$ are respectively called the \emph{transition} and \emph{production} functions. Examples of such Mealy automata are depicted Figure~\ref{fig:Mealy}. The transition and production functions can be extended to words as follows:  see~\(\aut{A}\) as an automaton with  input and  output tapes, thus
defining mappings from input words over~$\Sigma$ to output words
over~$\XX$.
Formally, for~\(q\in \QQ\), the map~$\rho_q\colon\XX^* \rightarrow \XX^*$,
extending~$\rho_q\colon\XX \rightarrow \XX$, is defined recursively by:
\begin{equation}\label{eq:acttree}
\forall i \in \XX, \ \forall \mot{s} \in \XX^*, \qquad
\rho_q(i\mot{s}) = \rho_q(i)\rho_{\delta_i(q)}(\mot{s}) \:.
\end{equation}

Observe that $\rho_q$ preserves the length of words in $\XX^*$. 
We can also extend the map $\rho$ to words of states $\mot{u} \in \QQ^*$ by composing the production functions associated with the letters of $\mot{u}$:
\begin{equation}
 \forall q \in \QQ, \ \forall \mot{u} \in \QQ^*, \qquad
\rho_{q\mot{u}} = \rho_{\mot{u}}\circ \rho_{q}\:.
\end{equation}
Therefore the production functions $\rho_q : \XX^* \to \XX^* $ of an automaton $\auta$ generate a semigroup $\presp{\auta}:= \{\rho_{\mot{u}} : \XX^* \to \XX^* | \mot{u} \in \QQ^*\}$, all of which elements preserve the length of words in $\XX^*$. 

A Mealy automaton is said to be \emph{invertible} whenever $\rho_q$ is a permutation of the alphabet for every $q \in \QQ$. In this case, all functions $\rho_q : \XX^* \to \XX^*$ are invertible and the automaton actually generates a group:
\[
\pres{\auta}:=\pres{\rho_q^{\pm 1}:\XX^*\to\XX^*|q \in \QQ}=\left\{\rho_{\mot{u}}:\XX^*\to\XX^*|\mot{u} \in \left( \QQ \cup \QQ^{-1}\right)^* \right\}\:.
\] 
It is not difficult to find another (symmetrized) Mealy automaton generating this group as a semigroup. Its stateset can be taken as $Q \cup Q^{-1}$ such that $\rho_q^{-1}=\rho_{q^{-1}}$. A group of the form $\pres{\auta}$ for an invertible automaton is called an \emph{automaton group}.
Such a group acts naturally on the set~\(\XX^*\).

Observe that given an integer $k$ and a Mealy automaton~$\auta=\auttuple$ with alphabet~$\XX$, one can associate to it a Mealy automaton~$(\QQ,\XX^k,\delta,\rho)$ where we identify the function~$\rho_q$ acting on~$\XX$ with the same function acting on~$\XX^k$ via (\ref{eq:acttree}). This just amounts to replacing the alphabet by the set of syllables of length~$k$. With a slight abuse of language, we call the latter automaton the level~$k$ of~$\auta$. Notice that this level~$k$ automaton generates the same group as the original automaton.

\subsection{Self-similar groups}
One can describe the automaton group through this action via   \emph{wreath recursion} and obtain a so-called \emph{self-similar} group. First let us recall a few definitions.
Let us write \(\Gamma \curvearrowright X\) the action of a group \(\Gamma\) on a set \(X\) and $\act{\gamma}{x} \in X$
the transformation of \(x \in X\) induced by \(\gamma \in \Gamma\).
The \emph{permutational wreath product} \(G \wr_\XX \Sym(\XX)\) of  a group \(G\) over a set $\XX$ is the set $G^\XX \times \Sym(\XX)$ together with the operation 
\[
\wrpr{\sect{g}{x_1},\hdots, \sect{g}{x_d}}{\perm{g}}.\wrpr{\sect{h}{x_1},\hdots, \sect{h}{x_d}}{\perm{h}}=\wrpr{\sect{g}{x_1}\sect{h}{\perm{g}^{-1}.x_1},\hdots, \sect{g}{x_d}\sect{h}{\perm{g}^{-1}.x_d}}{\perm{g}\perm{h}}
\] 
for the obvious action of $\Sym(\XX)$ on $\XX  = \lbrace x_1, x_2, \hdots, x_d \rbrace$.

Assume we are given a group $G$ together with an injective homomorphism $\psi:G \hookrightarrow G \wr_\XX \Sym(\XX)$. Then one can describe the group elements with  a recursive formula, where we canonically identify $g$ and $\psi(g)$
\begin{equation}\label{eq:sec}
g =\psi(g)= \wrpr{\sect{g}{x_1},\hdots, \sect{g}{x_d}}{\perm{g}} \:.
\end{equation}
The group element  $\sect{g}{x_i}$ is called the \emph{section} of $g$ in $x_i$ and $\perm{g}$ is the \emph{action} of $g$. One can extend the section to words : $\forall \mot{u}x \in \XX^*, \sect{g}{\mot{u}x}  = \sect{{\sect{g}{\mot{u}}}}{x}$.

One can also extend the action of $G$ to $\XX^*$ by $\forall x\mot{u} \in \XX^*, g.x\mot{u}=(\pi(g).x)\sect{g}{x}.\mot{u}$.
The action of \(G\) on \(\XX^*\), is called \emph{self-similar} if 
\[\forall g \in G, \forall \mot{u} \in \XX^*, \forall x \in \XX, \exists h \in G, \exists y \in \XX, \act{g}{x\mot{u}} = y\act{h}{\mot{u}}\:. \]

If, for every element $g$ of $G$ the set $\lbrace \sect{g}{\mot{u}}, \mot{u} \in \SS^* \rbrace$ is finite, the action is said to be  \emph{finite state}. It is easy to see that such a finite state self-similar action corresponds to exactly one automaton group, possibly with infinite stateset. A quick way to describe an automaton group is to give the section decomposition (\ref{eq:sec}) of all elements of the stateset.

The correspondence between Mealy automata and self-similar groups can be summarized as: 
\begin{equation*}
\rho_q(x)=y \textrm{ and }\delta_x(q)=p \quad \iff \quad  \sect{q}{x} = p \text{ and } \perm{q}.x= y\:,
\end{equation*} 
which we represent  by the arrow notation $q \xrightarrow{x\mid y} p \ \in \auta$. The stateset of the automaton corresponds to a generating set of the self-similar group.

\subsection{Diagram, dual automaton and Schreier graph}

The arrow representation permits to describe a Mealy automata by its \emph{diagram}, which is a labelled oriented graph. The vertices of the diagram are labelled by the states. From each vertex $q$ and letter $x$, there is an oriented edge from $q$ to $p=\delta_x(q)$ which we label by  $x \mid y$ where $y=\rho_q(x)$. The diagrams of most groups studied in this paper are pictured in \cref{app:fig}.

The roles of the stateset and the alphabet in an automaton are symmetric in the definition. The \emph{dual automaton} of $\auta=\auttuple$ is the automata $\hat{\auta}=(\XX,\QQ,\rho,\delta)$ obtained by exchanging them. This amounts to
\[
q \xrightarrow{x\mid y} p  \ \in \auta \quad \iff \quad x \xrightarrow{q\mid p} y  \in \ \hat{\auta} \quad \textrm{ for all }p,q,x,y.
\]

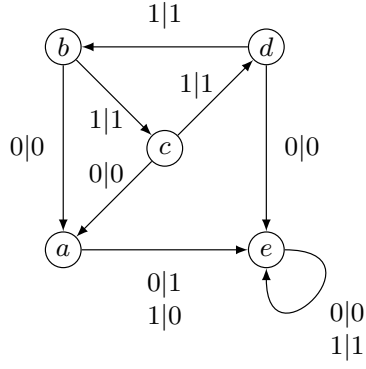
\begin{figure}[h!]

\begin{center}

{
\scalebox{1.125}{
\footnotesize{
	\begin{tikzpicture}[->,>=latex,node distance=12mm]

		\node[state] (c)  {$c$};
				\node[state] (a) [left of=c,below of=c] {$a$};
				\node[state] (b) [left of=c, above of=c] {$b$};
				\node[state] (d) [right of=c, above of=c] {$d$};
				\node[state] (e) [right of=c,below of=c] {$e$};
			\path
		
				(a)	edge	node[below=.1cm]{\(\begin{array}{c} {0|1} \\ {1|0} \end{array}\)}	(e)
				(b) edge 	node[left=.1cm]{\(0|0\)} (a)
							(b) edge 	node[below=.1cm, pos=0.4]{\(1|1\)} (c)
				(c) edge 	node[above=.1cm, pos=0.6]{\(0|0\)} (a)
				(c) edge 	node[above=.1cm, pos=0.25]{\(1|1\)} (d)
				(d) edge 	node[right=.1cm]{\(0|0\)} (e)
				(d) edge 	node[above=.1cm]{\(1|1\)} (b)
				(e) edge[min distance=10mm,out=00,in=270,looseness=10] node[below right =-1mm and -1mm]{\(\begin{array}{c} 0|0\\ 1|1 \end{array}\)}	(e)
		
		;
	\end{tikzpicture}
	}
	}	
}
\end{center}
\caption{The automata generating the first Grigorchuk group.}\label{fig:Mealy}
\end{figure}

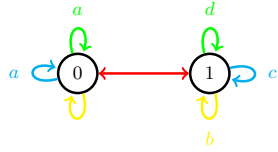
\begin{figure}[h!]

\begin{center}
{
\scalebox{.8}{
\footnotesize{
	\begin{tikzpicture}[very thick]
			\node[state, inner sep =4pt] (0)  {$0$};
				\node[state, inner sep =4pt] (1) [right = 1.5 cm of 0] {$1$};
					
			\path
				(0)	edge[red,<->]	node[above=.1cm]{\(\)}	(1)			
				(0)	edge[yellow,loop below,->]	node[below=.1cm]{\(\)} (0)	
				(1)	edge[yellow,loop below,->]	node[below=.1cm]{\(b\)} (1)				

					
				(0)	edge[cyan,loop left,->]	node[left=.1cm]{\(a\)} (0)	
				(1)	edge[cyan,loop right,->]	node[right=.1cm]{\(c\)} (1)	

				(0)	edge[green,loop above,->]	node[above=.1cm]{\(a\)} (0)	
				(1)	edge[green,loop above,->]	node[above=.1cm]{\(d\)}(1)

		;
	\end{tikzpicture}
	}

	}	
}

\end{center}
\caption{The  Schreier graph on level 1 of the automata generating the first Grigorchuk group.}\label{fig:Gri3}
\end{figure}

When we draw the diagram of the dual automaton $\hat{\auta}$, we obtain the \emph{Schreier graph} of the action of $\langle \auta \rangle$ on the alphabet $\SS$, i.e. the graph with vertex set $\SS$ and edges from $x$ to $y=q.x=\rho_q(x)$ labelled by $q$.

\subsection{Examples}\label{sec:examples}

In this paper we will focus on a few iconic examples.

\begin{itemize}
\item The first Grigorchuk group is the most famous group of intermediate growth. A stateset is given by 
\[
a=\langle e,e \rangle (0,1), \quad b=\langle a,c \rangle, \quad c=\langle a,d \rangle, \quad d=\langle e,b \rangle.
\]
Its diagram and Schreier graphs are depicted in \cref{fig:Mealy}. We also apply our algorithm to disguised versions of the first Grigorchuk, where we look at it as generated by the automata acting on the second or third levels of the tree. They provide interesting situations where we know the actual optimal bound but face a more challenging computation challenge. The diagram and Schreier graph of the automaton acting on the third level are drawn in \cref{fig:Gri3}. These Schreier graphs have been extensively studied by Vorobets~\cite{Vor10}.

\item We introduce a close relative of the Grigorchuk groups. It acts on an alphabet with $6$ letters. Its stateset is given by
\begin{align*}
a &=\langle e,e,e,e,e,e \rangle (1,2)(3,4)(5,6) \\
b&= \langle e,e,e,e,e,b \rangle (4,5) \\
c&= \langle a,e,e,e,e,c \rangle (2,3) \\
d&= \langle a,e,e,e,e,d \rangle (2,3)(4,5)
\end{align*}
and its diagram and Schreier graph depicted in \cref{fig:T1}.
Its introduction is motivated by the similarities with the Schreier graph descriptions the Grigorchuk group in~\cite{Vor10} (compare \cref{fig:Gri3}) and the fragmentations of dihedral actions as introduced by Nekrashevych in~\cite{Nek18}. Note that the elementary relations $a^2=b^2=c^2=d^2=bcd=e$ of the Grigorchuk groups are satisfied, however this group does not belong to the class of Grigorchuk groups introduced in \cite{Gri85} because it contains elements of order $6$.

\item The group introduced by the first author in~\cite{Bri17} and whose diagram and Schreier graphs are depicted in \cref{fig:mNote} was a motivation for the present work. It is related to Wilson's groups of non-uniform growth~\cite{Wil04,Wilson2003,Brieussel2009}. It is generated by an involution and an element of order $3$.

\item Similarly to the previous example, we introduce a new automaton group which is a close relative of a group of intermediate growth constructed by Bartholdi in relationship with non-uniform growth~\cite{BartholdiNonU2003}. Its diagram and Schreier graphs are depicted in \cref{fig:Y}. It is generated by three involutions and has exponential activity in the sense of Sidki.

\item Finally, we provide an example of an automaton group with $9$ states acting on an alphabet with $17$ letters. Its Schreier graph has the shape of an $X$ but the labellings are inspired by the Schreier graph of the first Grigorchuk group. Its stateset is:
\begin{align*}
a &=\langle e,e,e,e,e,e,e,e,e,e,e,e,e,e,e,e,e\rangle (1,2)(6,7)(12,13)(15,16)\\
b &=\langle e,e,e,e,e,e,e,e,e,e,e,e,e,e,e,e,b\rangle (4,5)(7,8)(1,10)(14,15)\\
c &=\langle e,e,e,e,e,e,e,e,a,e,e,e,e,e,e,e,c\rangle (4,5)(14,15)\\
d &=\langle e,e,e,e,e,e,e,e,a,e,e,e,e,e,e,e,d\rangle (7,8)(1,10)\\
a' &=\langle e,e,e,e,e,e,e,e,e,e,e,e,e,e,e,e,b\rangle (3,4)(8,9)(10,11)(1,14)\\
b' &=\langle e,e,e,e,b',e,e,e,e,e,e,e,e,e,e,e,e\rangle (2,3)(1,6)(11,12)(16,17)\\
c' &=\langle e,e,e,e,c',e,e,e,e,e,e,e,a',e,e,e,e\rangle (2,3)(16,17)\\
d' &=\langle e,e,e,e,d',e,e,e,e,e,e,e,a',e,e,e,e\rangle (1,6)(11,12).\\
\end{align*}
It satisfies the relations $a^2=b^2=c^2=d^2=bcd=a'^2=b'^2=c'^2=d'^2=b'c'd'=e$.
\end{itemize}

\section{Word norms and growth}\label{sec:growth}


\subsection{Word lengths and word norms}

Let \(G\) be a group. A subset \(S\) is \emph{generating} (in the sense of semigroups) if for any \(g \in G\) there exists \(s_1,\dots,s_n \in S\) such that \(g=s_1\dots s_n\). In other terms, there is a word in the free semi-group $S^*$ generated by $S$ that equals $g$ once evaluated in $G$. By convention, we assume that the neutral element $e$ is not in $S$.
A \emph{weight} function on $S$ is a positive function $\pi:S\to \R_{>0}$. A subset $S$ of $G$ is \emph{symmetric} if $S=S^{-1}$ and in this case $\pi$ is symmetric if $\pi(s^{-1})=\pi(s)$ for all $s$ in $S$. Throughout the paper, we consider triples $(G,S,\pi)$, where $G$ is a group together with a finite generating set $S$ and a weight $\pi$ on it. 

For any $w=s_1\dots s_n$ in $S^*$, we call \emph{length} of $w$ with respect to $\pi$ the number
\[
|w|_\pi:=\sum_{i=1}^n \pi(s_i).
\]
In particular $|s|_{\pi}=\pi(s)$ for all $s \in S$. By convention the empty sum is zero. 
For any  $g$ in \(G\), we call \emph{norm} of $G$ with respect to $(S,\pi)$ the number
\[
\|g\|_{S,\pi}:=\inf\left\{\left|w\right|_\pi : w \in S, w=_Gg\right\}.
\]
When $\pi$ is constant equal to one, we recover the usual word norm with respect to $S$. In this case, we simply write $\|\cdot\|_S$. The terminology is justified by the

\begin{proposition}\label{wnorm}
The function $\|\cdot\|_{S,\pi}:G\to \R_{\ge 0}$  satisfies 
\begin{enumerate}[label=(\alph*)]
\item \(\forall g,h \in G, \|gh\|_{S,\pi}\le \|g\|_{S,\pi}+\|h\|_{S,\pi}\),
\item \(\|g\|_{S,\pi}=0\) if and only if \(g=e\) is the neutral element.
\end{enumerate}
Moreover, the infimum is in fact a minimum.
\end{proposition}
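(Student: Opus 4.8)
The plan is to verify each claim directly from the definitions of $|\cdot|_\pi$ on $S^*$ and $\|\cdot\|_{S,\pi}$ on $G$, handling the minimum assertion first since properties (a) and (b) are cleaner once we know the infimum is attained.

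First I would establish that the infimum defining $\|g\|_{S,\pi}$ is a minimum. Fix $g\in G$ and note the set $\{|w|_\pi : w\in S^*,\ w=_G g\}$ is nonempty (since $S$ generates $G$ as a semigroup) and bounded below by $0$; let $m=\|g\|_{S,\pi}$ be its infimum. Set $\mu=\min_{s\in S}\pi(s)>0$, which exists because $S$ is finite and $\pi$ is positive. Any word $w$ with $|w|_\pi\le m+1$ has at most $(m+1)/\mu$ letters, so only finitely many words in $S^*$ represent $g$ with $\pi$-length within $1$ of the infimum; the infimum over this finite set is achieved, hence $m$ is a minimum. This is the step I expect to be the main (though still routine) obstacle, since it is the only place one must argue something is not merely approached — it uses finiteness of $S$ and strict positivity of $\pi$ in an essential way.

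Next, for (a), take words $u,v\in S^*$ with $u=_G g$, $v=_G h$, $|u|_\pi=\|g\|_{S,\pi}$, $|v|_\pi=\|h\|_{S,\pi}$ (using the minimum just established). Their concatenation $uv\in S^*$ satisfies $uv=_G gh$ and $|uv|_\pi=|u|_\pi+|v|_\pi$ directly from the definition of $|\cdot|_\pi$ as a sum over letters. Hence $\|gh\|_{S,\pi}\le |uv|_\pi=\|g\|_{S,\pi}+\|h\|_{S,\pi}$.

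Finally, for (b): if $g=e$, the empty word represents $e$ and has $\pi$-length $0$ by the empty-sum convention, so $\|e\|_{S,\pi}=0$. Conversely, suppose $\|g\|_{S,\pi}=0$; by the minimum, there is $w\in S^*$ with $w=_G g$ and $|w|_\pi=0$. Since $|w|_\pi=\sum_{i=1}^n\pi(s_i)$ is a sum of strictly positive terms $\pi(s_i)\ge\mu>0$, it can vanish only if $n=0$, i.e.\ $w$ is the empty word, whence $g=e$. (Here one uses the standing convention that $e\notin S$, so no nonempty word equals $e$ with zero length via a spurious identity generator.) This completes all three parts.
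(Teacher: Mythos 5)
Your proof is correct and takes essentially the same route as the paper's: both establish that the infimum is a minimum via finiteness of $S$ and strict positivity of $\pi$ (so that only finitely many words have $\pi$-length below any bound), deduce (b) from the fact that only the empty word has zero $\pi$-length, and prove (a) by concatenating minimal representatives. The only cosmetic difference is the order in which the three assertions are handled.
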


A word $w$ is a \emph{minimal representative} of a group element $g$ if $|w|_\pi=\|g\|_{S,\pi}$. It always exists but may not be unique. Observe that a subword of a minimal word is also minimal by (a). Regarding generators, we always have $|s|_\pi \ge \|s\|_{S,\pi}$, but the converse inequality may not be true. However when we have such a strict inequality, the generator $s$ never appears in minimal representative words, so the generating set is in fact redundant.

\begin{proof}
Let $g \neq e$. By assumption there is a word $w$ in $S$ representing $g$, so $\|g\|_{S,\pi}$ is finite. Since $\pi_0=\min\{\pi(s)|s\in S\}>0$, there are only finitely many words in $S^*$ of $\pi$-length less than $|w|_\pi$, so the infimum is a minimum. Moreover only the empty word (representing the neutral element) has zero $\pi$-length. This prove (b). Finally let $w_g$ and $w_h$ be minimal representative words of $g$ and $h$, then $w_gw_h$ represents $gh$, assertion (a) follows.
\end{proof}

This word norm induces a metric \(d_{S,\pi}(x,y):=\|x^{-1}y\|_{S,\pi}\) on \(G\). Recall that a metric is a function $d:G\times G \to \R_{\ge 0}$ such that for all $x,y,z \in G$, we have $d(x,z) \le d(x,y)+d(y,z)$ and that $d(x,y)=0$ if and only if $x=y$. This metric is left-invariant under the group action in the sense that $d_{S,\pi}(gx,gy)=d_{S,\pi}(x,y)$ for all $g,x,y$ in $G$. Observe that this metric is symmetric, which means $d_{S,\pi}(x,y)=d_{s,\pi}(y,x)$ for all $x,y$, if and only if both $S$ and $\pi$ are symmetric.

When we identify the group $G$ with the vertices of its Cayley graph \(\Gamma_{G,S}\), the metric $d_{S,\pi}$ coincides with the metric inherited by the vertex set when we declare that each oriented edge labelled by $s$ has length $\pi(s)$. It implies that the path in \(\Gamma_{G,S}\) defined by following edges labelled by the letters of a minimal word \(w\) is a geodesic (i.e. shortest) path from the starting point $x$ to the end point $y=xw$.

\subsection{Growth functions and growth rate}

\begin{definition} The \emph{ball} of radius $\ell$ and center $x$ in $(G,d_{S,\pi})$ is the set
\[
B_{G,S,\pi}(x,\ell):=\{y \in G:d_{S,\pi}(x,y)\le \ell\}.
\]
The \emph{growth} function of \(G\) with respect to \((S,\pi)\) counts the sizes of the balls 
\[
\gamma_{G,S,\pi}(\ell)=\#B_{G,S,\pi}(x,\ell)\:.
\]
By left-invariance, the size of a ball in a group depends only on the radius $\ell$ and not on the center~$x$. To fix ideas, one may choose \(x=e\).
\end{definition}

Let us say that two functions \(f,g\) are \emph{equivalent} when there exists \(c>0\) such that \(g(\frac{1}{c}\ell) \le f(\ell) \le g(c\ell)\) for all \(\ell\ge 0\). The next lemma ensures that the equivalence class of $\grgrow_{G,S,\pi}(\ell)$ does not depend on \(S\) nor on \(\pi\). This class is called the growth \emph{rate} of \(G\), and denoted by abuse of notation~$\grgrow_{G}(\ell)$.

\begin{lemma}\label{lem:growthindegen}
Let \((S,\pi)\) and \((T,\rho)\) be two finite weighted generating sets of the group \(G\), then there exists $c>0$ such that
$\grgrow_{G,S,\pi}(\ell)\le \grgrow_{G,T,\rho}(c\ell)$ for all $\ell$.
\end{lemma}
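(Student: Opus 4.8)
The plan is to show that each generator $t \in T$ can be written as a word $w_t$ in the free semigroup $S^*$ (this is possible since $S$ generates $G$), and to use these words to convert an $S^*$-word for an arbitrary $g \in G$ into a $T^*$-word, with controlled blow-up in length. Let me set up the constants. Put $\pi_0 = \min\{\pi(s) : s \in S\} > 0$ and $\rho_{\max} = \max\{\rho(t) : t \in T\}$. For each $t \in T$ choose a representative word $w_t \in S^*$ with $w_t =_G t$, and set $M = \max_{t \in T} |w_t|_\pi$, which is a finite positive number since $T$ is finite.

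\medskip

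\emph{Step 1: one direction suffices via symmetry of the roles.} The claimed inequality is $\gamma_{G,S,\pi}(\ell) \le \gamma_{G,T,\rho}(c\ell)$; note that the \emph{equivalence} statement announced before the lemma then follows by applying the lemma twice with the roles of $(S,\pi)$ and $(T,\rho)$ exchanged. So I only need to produce one such $c$.

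\medskip

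\emph{Step 2: a Lipschitz-type comparison of norms.} I claim that for every $g \in G$,
\[
\|g\|_{T,\rho} \le \frac{M}{\pi_0}\,\|g\|_{S,\pi}.
\]
To see this, let $w = s_1 \cdots s_n$ be a minimal representative of $g$ over $S$, so $|w|_\pi = \sum_{i=1}^n \pi(s_i) = \|g\|_{S,\pi}$ and in particular $n \le \|g\|_{S,\pi}/\pi_0$ since each $\pi(s_i) \ge \pi_0$. Now I need a word over $T$ expressing $g$; here is the one subtlety. Since $S$ generates $G$ as a semigroup, each $s_i$ equals some word in $S^*$ trivially, but I want a word over $T$. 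If $T$ is symmetric this is immediate by writing each $s_i^{\pm 1}$; to avoid assuming symmetry, I instead observe that each $s_i$, being an element of the group $G$, admits \emph{some} expression as a word over $T$ (apply the semigroup-generation hypothesis for $T$), so fix for each $s \in S$ a word $v_s \in T^*$ with $v_s =_G s$ and let $M' = \max_{s \in S} |v_s|_\rho < \infty$. Then $v_{s_1} \cdots v_{s_n}$ is a word over $T$ representing $g$, of $\rho$-length at most $n M' \le (M'/\pi_0)\,\|g\|_{S,\pi}$. Hence $\|g\|_{T,\rho} \le (M'/\pi_0)\,\|g\|_{S,\pi}$; rename $c := M'/\pi_0$.

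\medskip

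\emph{Step 3: transfer to balls.} The norm inequality $\|g\|_{T,\rho} \le c\,\|g\|_{S,\pi}$ says exactly that $d_{S,\pi}(e,g) \le \ell$ implies $d_{T,\rho}(e,g) \le c\ell$, i.e. $B_{G,S,\pi}(e,\ell) \subseteq B_{G,T,\rho}(e,c\ell)$. Taking cardinalities and using left-invariance (so that ball sizes do not depend on the center) gives $\gamma_{G,S,\pi}(\ell) = \#B_{G,S,\pi}(e,\ell) \le \#B_{G,T,\rho}(e,c\ell) = \gamma_{G,T,\rho}(c\ell)$ for all $\ell \ge 0$, as required. The only point that needs a moment's care — and the closest thing to an obstacle — is that one must express the $s_i$ as words over $T$ rather than the other way around; once one uses the semigroup-generation hypothesis symmetrically for both $S$ and $T$, no symmetry assumption on the generating sets or weights is needed, and the rest is bookkeeping with the two finite constants $\pi_0$ and $M'$.
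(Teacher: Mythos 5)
Your proof is correct and takes essentially the same approach as the paper: both establish the Lipschitz bound $\|g\|_{T,\rho}\le c\,\|g\|_{S,\pi}$ by taking a minimal $S$-word for $g$, translating it letter by letter into a word over $T$, and then conclude by the ball inclusion $B_{G,S,\pi}(\ell)\subseteq B_{G,T,\rho}(c\ell)$. The only difference is bookkeeping for the constant: you bound the number of letters $n$ by $\|g\|_{S,\pi}/\pi_0$ and use the coarse $M'=\max_s|v_s|_\rho$, whereas the paper uses the sharper $c=\max_{s\in S}\|s\|_{T,\rho}/\|s\|_{S,\pi}$ together with the observation that each single letter of a minimal word is itself a minimal representative, so $|s_i|_\pi=\|s_i\|_{S,\pi}$; both choices are finite and yield the same conclusion.
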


\begin{proof}Let  $c:=\max\left\{\right.\frac{\|s\|_{T,\rho}}{\|s\|_{S,\pi}}\left\vert s\in S\right\}\in (0,\infty)$. If $g \in B_{G,S,\pi}(\ell)$ then there exists a minimal representative word $w=s_1\dots s_n$ in $S^*$ so that
\[
\ell \ge \|g\|_{S,\pi}=|w|_\pi=\sum_{i=1}^n\pi(s_i)=\sum_{i=1}^n|s_i|_\pi=\sum_{i=1}^n\|s_i\|_{S,\pi}.
\]
A fortiori 
\[
\|g\|_{T,\rho} \le \sum_{i=1}^n \|s_i\|_{T,\rho} \le c\sum_{i=1}^n\|s_i\|_{S,\pi} \le c\ell.
\]
\end{proof}

\begin{definition} A group $G$ has
\begin{itemize}
\item \emph{polynomial} growth if $\grgrow_G(\ell)$ is bounded above by a polynomial function,
\item \emph{exponential} growth if there is $c>0$ such that $\grgrow_G(\ell)\ge \exp(c\ell)$ for all $\ell$,
\item  \emph{intermediate} growth if  $\grgrow_G(\ell)$ is  greater than any polynomial function and lesser than any exponential function:
\[
\forall d \in \Z, \alpha \in \R_{>0}, \exists N, \forall \ell \geq N, \ell^d \leq \grgrow_{G}(\ell) \leq \exp{(\alpha \ell)}.
\]
\end{itemize}
\end{definition}

\section{Essentially geodesic generating sets (eggs)}\label{sec:eggs}


This section is devoted to the key notion of essentially geodesic generating sets, in short eggs, of a triple $(G,S,\pi)$.

\subsection{Definition and examples}

\begin{definition}\label{defegg}
A finite subset \(\Delta\) of \(S^*\) is an {\emph{essentially geodesic generating}} set (in short, an egg) for $(G,S,\pi)$ if there is a finite subset \(F\) of  \(S^*\) such that for any \(g \in G\), there exist \( \d_1,\dots,\d_n \in \Delta\) and \( h \in F\) such that the word \(\d_1\dots\d_n h\) is a minimal representative of \(g\) with respect to $(S,\pi)$.
\end{definition}

Clearly, the words \(\d_i\) of the definition have to be minimal themselves. For example, \(S\) itself is an egg, with $F$ the empty set. Let us give more examples. 

Let \(\O\subset G\). Define its boundary as \(\partial \O:=\{g \in \O:\exists s \in S, gs \notin \O\}\). Denote \(\Int(\O):=\O \setminus \partial \O\) the interior of \(\O\). In the Cayley graph, vertices in the boundary of \(\O\) are precisely those with at least one neighbor outside \(\O\).

\begin{lemma}\label{lem-bord}
Assume \(\O\subset G\) is finite and \(e\in \Int(\O)\). Then a set \(\Delta\) of minimal representative words of  the boundary \(\partial \O\) is an egg of $G$.
\end{lemma}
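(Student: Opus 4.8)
The plan is to argue that every group element $g$ has a minimal representative word whose path in the Cayley graph, read from $e$, hits the boundary $\partial\O$ the "last time" it is inside $\O$, and then to use that last boundary vertex to decompose the word. First I would take an arbitrary $g \in G$ and a minimal representative word $w = s_1 \dots s_n$ with respect to $(S,\pi)$; if $g \in \O$ this will need a tiny separate treatment (the word lies entirely in $\O$, so it is itself in $F$, after we throw all of the finitely many elements of $\O$ into $F$ — here finiteness of $\O$ is used). Otherwise $g \notin \O$, and since $e \in \Int(\O) \subset \O$, the path $e = g_0, g_1, \dots, g_n = g$ (where $g_i = s_1 \cdots s_i$) starts in $\O$ and ends outside $\O$. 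Let $k$ be the largest index with $g_k \in \O$; then $k < n$, and $g_{k+1} = g_k s_{k+1} \notin \O$, so by definition $g_k \in \partial\O$.

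**Decomposition step.** Now $w = (s_1 \dots s_k)(s_{k+1}\dots s_n)$ and $g = g_k \cdot (s_{k+1}\dots s_n)$. The prefix $s_1 \dots s_k$ is a minimal representative of the boundary element $g_k$ (a subword of a minimal word is minimal, by Proposition~\ref{wnorm}(a)), so it equals some $\delta \in \Delta$ up to the choice of minimal representatives — here I have to be a little careful: $\Delta$ is defined as \emph{a} set of minimal representative words of $\partial\O$, one per boundary vertex, so $s_1\dots s_k$ need not literally be the chosen representative $\delta_{g_k}$, but it represents the same group element, hence $\delta_{g_k}(s_{k+1}\dots s_n)$ is still a minimal representative of $g$ (same $\pi$-length, same group element). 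So I would rather phrase the conclusion as: $g$ admits a minimal representative of the form $\delta \cdot h$ with $\delta \in \Delta$ and $h = s_{k+1}\dots s_n$ a suffix of a minimal word, hence itself minimal.

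**Controlling the tail.** The remaining point is that the tail $h$ ranges over a \emph{finite} set $F \subset S^*$, independent of $g$. This is where I would spend the most care — it is the main obstacle. The tail $h = s_{k+1}\dots s_n$ represents $g_k^{-1} g$, and the obvious bound is $\|h\|_{S,\pi} = \|g_k^{-1}g\|_{S,\pi} \le \|g_k^{-1}\|_{S,\pi} + \|g\|_{S,\pi}$, which is \emph{not} bounded. The right observation is geometric: the path $g_k, g_{k+1}, \dots, g_n$ stays outside $\O$ after leaving it (since $k$ was the \emph{last} index in $\O$), i.e.\ $g_{k+i} \notin \O$ for $i \ge 1$. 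But this alone does not bound its length either, since $\O$ is finite and $G$ need not be. I therefore do not expect Lemma~\ref{lem-bord} to be true for arbitrary finite $\O$ with $e$ in the interior without further hypotheses — for instance in $\Z$ with $\O = \{-1,0,1\}$ the tails are unbounded. So the honest plan is: either (i) the intended statement silently assumes $\O$ is chosen so that $\partial\O$ generates in a controlled way (e.g.\ $\O$ is a ball, or a "convex" set, so that a minimal word for $g$ can be routed to leave $\O$ exactly once), or (ii) one replaces "any $g$" by "any $g$" together with a \emph{cleverly chosen} minimal word, using that for a minimal geodesic the portion inside $\O$ can be taken to be an initial segment — which is automatic — while the exit point can be forced to be the \emph{unique} last visit, and then bound the tail by noting that concatenating $\Delta$-blocks reconstructs all of $G$ by induction on $\|g\|_{S,\pi}$, so $F$ can be taken to be a transversal of the finitely many "residual" elements of small norm. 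I would write the proof by this induction: if $\|g\|_{S,\pi}$ is small (below $\max_{\delta\in\Delta}|\delta|_\pi$, say) put $g$'s minimal word in $F$; otherwise peel off the boundary block $\delta$ as above so that $g = \delta \cdot g'$ with $\|g'\|_{S,\pi} < \|g\|_{S,\pi}$ strictly (because $|\delta|_\pi = \|g_k\|_{S,\pi} > 0$), and recurse — with the key subtlety that one must verify the peeling strictly decreases the norm, which holds since $g_k \ne e$ as $e \in \Int(\O)$ forces $\partial\O \not\ni e$. Then $F$ is the (finite) set of minimal words of the finitely many elements of norm $< \max_{\delta}|\delta|_\pi$, and we are done.
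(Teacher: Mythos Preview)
Your long detour through the ``last crossing'' approach and the subsequent doubt about the lemma stem from a misreading of Definition~\ref{defegg}: an egg decomposition is $g =_G \delta_1\cdots\delta_n h$ with \emph{several} $\delta_i$'s, not a single $\delta$ followed by a bounded tail. In your $\Z$ example with $\Omega=\{-1,0,1\}$ one has $\partial\Omega=\{-1,1\}$, so $\Delta=S$ and every $n\in\Z$ is trivially $\delta_1\cdots\delta_{|n|}$ with empty $h$; the lemma holds there without difficulty. The unbounded tail you observe is not a counterexample to the lemma but only to the one-block version you were attempting.

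The induction you describe in your final paragraph is essentially the paper's proof: peel off one boundary prefix, note that the norm strictly decreases because $e\notin\partial\Omega$, and recurse. The paper takes any crossing (effectively the first) and sets $F=\Int(\Omega)$, which is the natural terminal set since the peeling step is available precisely when $g\notin\Int(\Omega)$. Your choice of threshold $\max_{\delta\in\Delta}|\delta|_\pi$ for the base case is not quite right: you need the inductive step to apply whenever $g$ is outside $F$, i.e.\ you need $g\notin\Int(\Omega)$, and there is no a priori reason every element of $\Int(\Omega)$ has norm below $\max_{\partial\Omega}\|\cdot\|_{S,\pi}$ (think of groups with dead ends). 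Taking $F$ to be minimal words for $\Int(\Omega)$, as the paper does, closes this cleanly and makes the finiteness of $F$ immediate from the finiteness of $\Omega$.
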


\begin{proof}
Given \(g \in G\), let \(w=s_1\dots s_n\) be a minimal representative word for \((S,\pi)\). Starting from \(e\) and following the edges labelled by the letters of \(w\) provides a geodesic path to \(g\) in the Cayley graph. If \(g \notin \Int(\O)\), this path must cross \(\partial \O\). This means there is a non-trivial prefix (necessarily minimal) \(w_1\) of \(w\) representing an element in \(\partial \O\). By assumption, there is \(\delta_1\in\Delta\) of the same length as \(w_1\) such that \(\d_1=_G w_1\). Then \(\delta_1^{-1}g\) is an element in \(G\) at distance from \(e\) less than \(d_{S,\pi}(e,g)-\pi_0\), where $\pi_0:=\min\{\pi(s)|s\in S\}>0$. Arguing by induction, we can write \(g=\d_1\dots \d_k h\) with \(\d_i \in \Delta\) and \(h \in \Int(\O)=:F\) which is finite.
\end{proof}

Of course not all eggs are of this form, because any set of words containing an egg is still an egg. But there are also eggs containing no such \(\partial \O\) as seen by the following.

\begin{example}\label{egg-no-bord}
Let \(S=\lbrace a^{\pm1},b^{\pm1}\rbrace\) be the usual generating set of \(\Z^2\) where \(a=(1,0),b=(0,1)\). Then for integers \(p,q>0\), the set \(\D=\lbrace a^{\pm p},b^{\pm q}\rbrace\) is an egg, because any $g \in \Z^2$ can be written minimally $g=a^{\lambda p}b^{\mu q}a^{r}b^s$ with $\lambda,\mu,r,s$ integers and $|r|<p$, $|s|<q$. However the minimal word \((ab)^k\) has no prefix in \(\D\).
\end{example}

\subsection{Stability under quotients}\label{sec:stabquo}

Let $\til{G}$ be a group together with a quotient group $G$. Denote by \(q:\til{G}\to G\) the quotient map. We assume that $\til{S}$ is a finite generating set of $\til{G}$ and that $q|_{\til{S}}$ is injective, inducing a bijection between $\til{S}$ and $S:=q(\til{S})$. Given a word $\til{w}=\til{s_1}\dots \til{s_n}$ in  $\til{S}$, we systematically write $w=s_1\dots s_n$ instead of $q(\til{w})=q(\til{s_1})\dots q(\til{s_n})$ the corresponding word in $S$. Clearly, $S$ is a generating set of $G$.

Let $\pi:\til{S}\to \R_{>0}$ be a weight function, it naturally induces a weight function on $S$ still denoted $\pi$. For any $\til{g}$ in $\til{G}$ we observe that $\|q(\til{g})\|_{S,\pi} \le \|\til{g}\|_{\til{S},\pi}$. Indeed, any word in $\til{S}$ representing $\til{g}$ corresponds to a word in $S$ of the same $\pi$-length representing $q(\til{g})$, so the infimum defining the left-hand side is taken over a bigger set of words.

\begin{lemma}\label{lem-quotient}
In the setting above, assume that  \(\til{\Delta}\) is an egg for \((\til{G},\til{S},\pi)\). Then  \(\Delta:=q(\til{\Delta})\) is an egg for \((G,S,\pi)\).
\end{lemma}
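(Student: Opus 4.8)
The plan is to transport a minimal representative of an arbitrary $g \in G$ back up to $\til{G}$, invoke the egg property there, and push the result back down. First I would fix $g \in G$ and choose a minimal representative word $w = s_1 \dots s_n$ in $S^*$ with $|w|_\pi = \|g\|_{S,\pi}$, which exists by \cref{wnorm}. Using the bijection $q|_{\til S}$ between $\til S$ and $S$, lift $w$ letter by letter to a word $\til w = \til{s_1}\dots\til{s_n}$ in $\til S^*$, and let $\til g := \til w$ evaluated in $\til G$, so that $q(\til g) = g$ and $|\til w|_\pi = |w|_\pi$.

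Now apply the egg hypothesis to $\til g$: there exist $\til\delta_1, \dots, \til\delta_m \in \til\Delta$ and $\til h \in \til F$ (with $\til F$ the finite set from \cref{defegg} for $(\til G, \til S, \pi)$) such that $\til\delta_1 \dots \til\delta_m \til h$ is a minimal representative of $\til g$ with respect to $(\til S, \pi)$. The next step is to check that the image word $\delta_1 \dots \delta_m h$ in $S^*$, where $\delta_i := q(\til\delta_i)$ and $h := q(\til h)$, is a minimal representative of $g$ with respect to $(S,\pi)$. It certainly represents $g$ since $q$ is a homomorphism, and its $\pi$-length equals $|\til\delta_1\dots\til\delta_m\til h|_\pi = \|\til g\|_{\til S, \pi}$. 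On the other hand, that length is at most $|\til w|_\pi = |w|_\pi = \|g\|_{S,\pi}$ because $\til w$ represents $\til g$ in $\til G$; combined with the general inequality $\|g\|_{S,\pi} \le |\delta_1\dots\delta_m h|_\pi$, all these quantities coincide, so $\delta_1\dots\delta_m h$ is indeed minimal. Taking $F := q(\til F)$, which is finite since $\til F$ is, and $\Delta := q(\til\Delta)$, we have exhibited for every $g \in G$ the required factorization, so $\Delta$ is an egg for $(G,S,\pi)$.

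The only subtle point — and the step I would be most careful about — is the chain of inequalities forcing minimality downstairs: one needs $\|\til g\|_{\til S,\pi} \le \|g\|_{S,\pi}$, which is exactly the observation recorded just before the lemma (any $\til S$-word for $\til g$ maps to an $S$-word of equal $\pi$-length for $q(\til g)$), together with the trivial direction $\|g\|_{S,\pi} \le |\delta_1\dots\delta_m h|_\pi$ and the equality $|\delta_1\dots\delta_m h|_\pi = |\til\delta_1\dots\til\delta_m\til h|_\pi = \|\til g\|_{\til S,\pi}$ coming from minimality upstairs. Squeezing these gives $\|g\|_{S,\pi} = |\delta_1\dots\delta_m h|_\pi$. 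Everything else is bookkeeping: $\Delta$ and $F$ are finite as images of finite sets, and each $\delta_i \in q(\til\Delta) = \Delta$, $h \in q(\til F) = F$ by construction. No induction is needed here since the egg property upstairs already supplies the full factorization in one shot.
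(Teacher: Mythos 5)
Your proof is correct and follows essentially the same route as the paper: lift a minimal $S$-word for $g$ to a $\til{S}$-word $\til{w}$ of equal $\pi$-length, apply the egg hypothesis upstairs to the element $\til{g}$ it represents, and push the resulting factorization $\til\delta_1\dots\til\delta_m\til h$ back down, using the length-preservation under $q$ to transfer minimality. One small mislabel in your closing paragraph: the inequality $\|\til g\|_{\til S,\pi}\le\|g\|_{S,\pi}$ is \emph{not} what the observation recorded before the lemma asserts — that observation gives the opposite direction, $\|q(\til g)\|_{S,\pi}\le\|\til g\|_{\til S,\pi}$ — but as you correctly argue in the body of the proof, the inequality you actually need follows simply from the facts that $\til w$ is a representative of $\til g$ and $|\til w|_\pi=|w|_\pi=\|g\|_{S,\pi}$ by minimality of $w$.
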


\begin{proof}
Let $g$ belong to $G$ and $w$ be a minimal representative word of $g$ in $S$. Then $\til{w}$ is a minimal word in $\til{S}$ (by the observation before the lemma). As $\til{\Delta}$ is an egg, we have $\til{w}=_G \til{\delta_1}\dots \til{\delta_n}\til{h}=:\til{w_1}$, a minimal word with $\til{\delta_i}$ in $\til{\Delta}$ and $\til{h}$ in some finite set $\til{F}$ of words in $\til{S}$.  A fortiori, $w_1=\delta_1\dots\delta_n h$ is the required representative word of $g$.
\end{proof}

\begin{example}
For \(G\) the first Grigorchuk group as depicted in \cref{fig:Mealy}, it is natural to consider \(\til{G}=\langle a,b,c,d|a^2=b^2=c^2=d^2=bcd=e\rangle\), which is isomorphic to the free product \(\Z/2\Z \ast \left(\Z/2\Z\oplus\Z/2\Z\right)\).
\end{example}

\subsection{Stability of eggs under change of weights}\label{sec:stabweight}

Consider an egg $\D$ for $(G,S,\pi)$. We want to know under what condition the set $\D$ is still an egg for $(G,S,\pi')$. Clearly this is the case if among words in $S$, minimality for $\|\cdot\|_{S,\pi}$ is equivalent to minimality for $\|\cdot\|_{S,\pi'}$. 

\begin{lemma}
Let $G:=\F(X)$ be the free group with finite free generating set $X$. Let $S=X \cup X^{-1}$ and $\pi:S\to \R_{>0}$ be a weight. A word representative is minimal for $\|\cdot\|_{S,\pi}$ if and only if it is minimal for the uniform word norm $\|\cdot\|_S$.

In particular, a set $\Delta$ is an egg for $\|\cdot\|_{S,\pi}$ if and only if it is an egg for the uniform word norm~$\|\cdot\|_S$.
\end{lemma}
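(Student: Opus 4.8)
The plan is to show that in a free group, minimal-length words (in the sense of the uniform word norm) are exactly the reduced words, independently of the weight $\pi$. The key observation is the standard normal-form theory of free groups: every element $g \in \F(X)$ has a unique reduced word representative (no cancellation of the form $xx^{-1}$ or $x^{-1}x$), and any non-reduced word can be shortened by one such cancellation, which strictly decreases the number of letters. First I would argue that a minimal representative for $\|\cdot\|_{S,\pi}$ must be reduced: if $w=s_1\dots s_n$ contains a cancelling pair $s_is_{i+1}=e$, then deleting those two letters yields a shorter word $w'$ with $w'=_G g$ and $|w'|_\pi = |w|_\pi - \pi(s_i) - \pi(s_{i+1}) < |w|_\pi$ since $\pi$ takes values in $\R_{>0}$, contradicting minimality. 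Conversely, since every element has a \emph{unique} reduced representative, every representative word of $g$ contains the reduced word of $g$ after performing all cancellations; hence any reduced word representing $g$ is \emph{the} reduced word, so there is exactly one reduced representative, which is therefore trivially minimal for every weight (there is no shorter reduced word, and we have just shown non-reduced words are non-minimal). This gives that the set of minimal representatives for $\|\cdot\|_{S,\pi}$ and the set of minimal representatives for $\|\cdot\|_S$ both coincide with the singleton consisting of the reduced word, hence with each other.

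Once this equivalence of minimality notions is established, the ``in particular'' clause about eggs is immediate from the remark preceding \cref{sec:stabweight}: the notion of egg (Definition~\ref{defegg}) only refers to which words are minimal representatives and to the finiteness of the auxiliary set $F$; since the weights only enter through minimality and we have shown minimality is weight-independent in the free group, a factorization $\delta_1\dots\delta_n h$ witnessing the egg property for $(G,S,\pi)$ witnesses it verbatim for $(G,S,\pi')$ and for $(G,S)$, with the same $\Delta$ and the same $F$. So no further argument is needed there.

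The main (minor) obstacle is just being careful about the uniqueness statement: one should invoke the standard fact that the reduced word of a group element in a free group is unique (e.g. via the normal form theorem, or the ping-pong action on the set of reduced words), and note that this is what forces every representative to reduce to the \emph{same} reduced word, so there cannot be two distinct reduced words of equal minimal length. This is where freeness is genuinely used — for a general group there could be several geodesic words of the same $\|\cdot\|_S$-length but different $\|\cdot\|_{S,\pi}$-lengths, so the lemma would fail. I would state the uniqueness of reduced forms as a known fact rather than reprove it.

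\begin{proof}
Recall that a word $w=s_1\dots s_n$ in $S=X\cup X^{-1}$ is \emph{reduced} if it contains no factor of the form $x x^{-1}$ or $x^{-1}x$ with $x\in X$. The normal form theorem for free groups states that every $g\in\F(X)$ is represented by a reduced word, and that this reduced word is unique.

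First, a minimal representative of $g$ for $\|\cdot\|_{S,\pi}$ is necessarily reduced. Indeed, if $w=s_1\dots s_n$ represents $g$ and some factor $s_i s_{i+1}$ equals the neutral element, then the word $w'$ obtained by deleting $s_i$ and $s_{i+1}$ still represents $g$, and $|w'|_\pi = |w|_\pi - \pi(s_i) - \pi(s_{i+1}) < |w|_\pi$ because $\pi$ is $\R_{>0}$-valued; so $w$ is not minimal. The same argument, with $\pi$ constant equal to one, shows that a minimal representative for $\|\cdot\|_S$ is reduced as well.

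Conversely, any word $w$ representing $g$ reduces, by iteratively deleting cancelling factors, to \emph{the} reduced word of $g$; in particular the reduced word of $g$ is the unique reduced representative of $g$. Hence for both norms the set of minimal representatives of $g$ is contained in the singleton $\{$reduced word of $g\}$, and since that reduced word does represent $g$ and is reduced, it realizes the infimum in both cases. Therefore, for every $g$, the reduced word of $g$ is the unique minimal representative for $\|\cdot\|_{S,\pi}$ and the unique minimal representative for $\|\cdot\|_S$; in particular the two notions of minimality coincide.

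For the last statement, suppose $\Delta$ is an egg for $(G,S,\pi)$, witnessed by a finite set $F\subset S^*$: for every $g\in G$ there are $\delta_1,\dots,\delta_n\in\Delta$ and $h\in F$ with $\delta_1\dots\delta_n h$ a minimal representative of $g$ for $\|\cdot\|_{S,\pi}$. By the equivalence of minimality just proved, $\delta_1\dots\delta_n h$ is also a minimal representative of $g$ for $\|\cdot\|_S$, so the same $\Delta$ and $F$ witness that $\Delta$ is an egg for the uniform word norm. The reverse implication is identical.
\end{proof}
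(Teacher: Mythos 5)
Your proof is correct and takes essentially the same approach as the paper: the paper phrases it geometrically (the Cayley graph is a tree, so minimal representatives are the unique non-backtracking paths), while you phrase it algebraically via the normal form theorem (reduced words are unique), but these are the same fact and the same argument.
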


\begin{proof}
As the Cayley graph is a tree, minimal representatives are unique and correspond to non-backtracking paths. Therefore they coincide for both norms.
\end{proof}

We can obtain a similar stability result in the context of free products of finite groups, but the weights need to satisfy some condition.

\begin{lemma}\label{minfred}
Let $G=\ast_{i\in I} G_i$ be a free product of finitely many finite groups. Let $S=\cup_{i\in I} G_i\setminus\{e\}$ and $\pi:S\to \R_{> 0}$ be a weight function such that
\begin{eqnarray}\label{cond}
\forall i \in I, \forall s,s' \in G_i\setminus\{e\}, \quad \pi(ss') < \pi(s)+\pi(s').
\end{eqnarray}
Then the following are equivalent:
\begin{enumerate}[label=(\roman*)]
\item a word $w=s_1\dots s_n$ is minimal for $\|\cdot\|_{S,\pi}$, 
\item the word $w$ is freely reduced, i.e. for all $1 \le k \le n-1$, the generators $s_k$ and $s_{k+1}$ belong to different factor groups $G_i$.
\end{enumerate}
\end{lemma}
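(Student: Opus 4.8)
The plan is to show the two implications separately, with the real content in $(ii)\Rightarrow(i)$. First I would recall the standard normal form in a free product: every element $g\in G=\ast_{i\in I}G_i$ has a unique expression $g=s_1\cdots s_n$ with each $s_k\in G_{i_k}\setminus\{e\}$ and $i_k\neq i_{k+1}$; this is exactly the ``freely reduced'' condition of $(ii)$. The direction $(i)\Rightarrow(ii)$ is the easy one: if a word $w=s_1\cdots s_n$ is \emph{not} freely reduced, then two consecutive letters $s_k,s_{k+1}$ lie in the same factor $G_i$, and I can replace the pair by the single letter $s_ks_{k+1}\in G_i$ (if it is nontrivial) or delete it entirely (if $s_ks_{k+1}=e$). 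By hypothesis~\eqref{cond}, in the first case $\pi(s_ks_{k+1})<\pi(s_k)+\pi(s_{k+1})$, so the resulting word is strictly shorter in $\pi$-length; in the second case it is obviously shorter. Either way $w$ was not minimal, proving the contrapositive.

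For $(ii)\Rightarrow(i)$, I would argue that all freely reduced words representing $g$ have the \emph{same} $\pi$-length, and that this common length is minimal. The first point follows because freely reduced words representing a fixed $g$ are not unique only up to the choice of how each ``syllable'' is written—but in a free product the syllable decomposition $g=g^{(1)}\cdots g^{(m)}$ into maximal blocks lying in a single factor is unique, and within a freely reduced word each syllable $g^{(j)}\in G_{i_j}\setminus\{e\}$ is represented by the single generator $g^{(j)}$ itself. Hence the freely reduced word for $g$ is in fact \emph{unique}, of $\pi$-length $\sum_j \pi(g^{(j)})$. It then remains to see no shorter representative exists: given any word $w'$ representing $g$, repeatedly apply the two reduction moves above (combine adjacent same-factor letters, delete trivial ones); by the computation in the previous paragraph each move does not increase $\pi$-length, and the process terminates at the unique freely reduced word for $g$. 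Therefore $\|g\|_{S,\pi}=|w|_\pi$ for the freely reduced $w$, so $w$ is minimal, which is $(i)$.

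The step I expect to require the most care is the \emph{termination} of the reduction process and the bookkeeping that each elementary move is non-increasing in $\pi$-length while some move is strictly decreasing whenever the word is not yet freely reduced. Termination is clear from, e.g., induction on word length $n$ (each move strictly decreases $n$), but one must be slightly careful that combining two adjacent same-factor letters into their product may itself create a new adjacency in the same factor, so the argument should be phrased as ``apply moves until none applies,'' using a well-founded measure such as the pair $(n,\text{number of bad adjacencies})$ or simply $n$. Once this is set up, the fact that~\eqref{cond} forces strict decrease on each combine-move, together with the uniqueness of the normal form, immediately yields the equivalence. I would conclude by remarking, as is implicit, that the hypothesis~\eqref{cond} cannot be dropped: if $\pi(ss')=\pi(s)+\pi(s')$ for some pair, a non-freely-reduced word could still be minimal, so the strictness is essential.
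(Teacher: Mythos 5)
Your proof is correct, and it diverges from the paper's in the direction $(ii)\Rightarrow(i)$. The direction $(i)\Rightarrow(ii)$ is handled identically in both: argue the contrapositive by combining or cancelling two adjacent same-factor letters, with condition~\eqref{cond} guaranteeing strict decrease of $\pi$-length. For $(ii)\Rightarrow(i)$, you argue constructively: the freely reduced word over $S=\cup_i G_i\setminus\{e\}$ representing a given $g$ is \emph{unique} (it is the normal form of the free product, one letter per syllable), and any other representative can be driven down to it by strictly $\pi$-decreasing moves, so the normal form is the unique $\pi$-minimiser. The paper instead argues by contradiction: assume $w$ freely reduced but not minimal, pick a minimal representative $w'$ which by the first direction may be taken freely reduced, locate the first index $k_0$ where $w$ and $w'$ differ, and form $s_m'^{-1}\dots s_{k_0}'^{-1}s_{k_0}\dots s_n=_G e$ to obtain a nontrivial freely reduced word for the identity, contradicting the free product structure. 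Both routes rest on the same structural fact about free products, namely uniqueness of the normal form (equivalently, that no nontrivial freely reduced word represents $e$); you make this explicit while the paper invokes it implicitly in the contradiction. Your version is somewhat more transparent and avoids two small technicalities the paper leaves to the reader: it must be checked that $s_{k_0}'^{-1}s_{k_0}$ is a legitimate (possibly merged but nontrivial) letter at the junction, since $s_{k_0}'$ and $s_{k_0}$ may lie in the same factor, and the degenerate case where one of $w,w'$ is a prefix of the other (so that $k_0$ is undefined) also needs a word. Your termination-and-uniqueness argument sidesteps both. One cosmetic point: you write that each reduction move is ``non-increasing'' and then that ``some move is strictly decreasing''; in fact under~\eqref{cond} \emph{every} applicable move (combine or delete) is strictly $\pi$-decreasing, which makes the bookkeeping even simpler than you suggest.
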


A weight function satisfying condition (\ref{cond}) is called \emph{triangular}. This condition is essentially necessary because if there are $s,s' \in G_i$ with $s_0=ss'$ and $\pi(s_0)>\pi(s)+\pi(s')$ then $s_0$ would be a freely reduced word not minimal for~$\|\cdot\|_{S,\pi}$.

As the usual word norm satisfies condition (\ref{cond}), minimality for $\|\cdot\|_{S}$ and for $\|\cdot\|_{\pi,S}$ are equivalent. We deduce the:

\begin{corollary}\label{cor:eggweight}
Let $G=\ast_{i\in I} G_i$ and $S, \pi$ be as in Lemma \ref{minfred}. A set $\Delta$ is an egg for $\|\cdot\|_{S,\pi}$ if and only if it is an egg for the usual word norm.
\end{corollary}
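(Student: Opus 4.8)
The corollary is meant to be a direct consequence of Lemma~\ref{minfred}, applied twice: once to the given weight $\pi$, and once to the constant weight $1$. The point is that Definition~\ref{defegg} of an egg refers only to the notion of \emph{minimal representative word}, so if the two norms $\|\cdot\|_{S,\pi}$ and $\|\cdot\|_{S}$ single out exactly the same minimal words in $S^*$, then $\Delta$ is an egg for one precisely when it is an egg for the other, with the \emph{same} witnessing finite set $F$.

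\textbf{Step 1: both norms select the freely reduced words.} By hypothesis $\pi$ is triangular, so Lemma~\ref{minfred} applies and tells us that a word $w=s_1\dots s_n$ in $S^*$ is minimal for $\|\cdot\|_{S,\pi}$ if and only if it is freely reduced (consecutive letters lie in distinct factors $G_i$). Next, the constant weight equal to $1$ is also triangular: for $i\in I$ and $s,s'\in G_i\setminus\{e\}$ one has $|ss'|\le 1<2=|s|+|s'|$ in the uniform metric (with the usual convention that the neutral element has length $0$), so condition~(\ref{cond}) holds. Hence Lemma~\ref{minfred} applies again, now to the uniform weight, and gives that $w$ is minimal for $\|\cdot\|_{S}$ if and only if it is freely reduced. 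Combining the two, a word in $S^*$ is a minimal representative (of the group element it evaluates to) with respect to $(S,\pi)$ if and only if it is a minimal representative with respect to $(S,\mathbf{1})$.

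\textbf{Step 2: transfer of the egg property.} Now unwind Definition~\ref{defegg}. The set $\Delta\subset S^*$ is an egg for $(G,S,\pi)$ if and only if there is a finite $F\subset S^*$ such that every $g\in G$ has a factorization $\d_1\cdots\d_n h$ with $\d_1,\dots,\d_n\in\Delta$ and $h\in F$ which is a minimal representative of $g$ for $(S,\pi)$. By Step~1, the words that are minimal representatives for $(S,\pi)$ are exactly those that are minimal representatives for $(S,\mathbf{1})$; hence the very same $F$ shows that $\Delta$ is an egg for the uniform word norm, and conversely the same $F$ transfers back. This proves both implications, and hence the equivalence.

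\textbf{Expected main obstacle.} There is essentially no obstacle: once Lemma~\ref{minfred} is in hand, the only thing that requires a (trivial) verification is that the uniform weight is triangular, so that the lemma is legitimately applicable to it; after that the statement is a purely formal consequence of the lemma together with the definition of an egg. I would simply make sure the convention $|e|=0$ is used consistently when checking~(\ref{cond}) for the constant weight, as is already done elsewhere in the paper.
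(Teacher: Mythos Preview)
Your argument is correct and follows exactly the paper's approach: the paper simply remarks that the usual word norm satisfies condition~(\ref{cond}), so by Lemma~\ref{minfred} minimality for $\|\cdot\|_{S}$ and for $\|\cdot\|_{S,\pi}$ coincide, and the corollary follows. Your write-up is just a more detailed version of this, including the explicit verification that the uniform weight is triangular and the unwinding of Definition~\ref{defegg}.
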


\begin{proof}[Proof of Lemma \ref{minfred}]
If a word is minimal, then successive letters must belong to different factors, otherwise using (\ref{cond}) we would replace the successive letters $s_k$ and $s_{k+1}$ by their product, thus obtaining the same group element but with a representative of shorter $\pi$-length. 

Conversely assume that $w$ is freely reduced, but not minimal. Then there exists $w'=s_1'\dots s_m'=_Gw$ a minimal representative word for $\|\cdot\|_{S,\pi}$. By the first part of the proof, we can assume $w'$ is freely reduced. Now let $k_0:=\min\{k:s_k'\neq s_k\}$, then $s_{k_0}\dots s_n=_Gs_{k_0}'\dots s_m'$ 
which implies $s_m'^{-1}\dots s_{k_0}'^{-1}s_{k_0}\dots s_n=_G e$. We obtain a freely reduced representative word of the identity. As $G$ is a free product, this is a contradiction unless $w$ and $w'$ are the same word.
\end{proof}

The context of free products of finite groups is restrictive. However, it seems delicate to obtain more general results about stability of eggs, as shown by the following:

\begin{example}
Consider the group $\Z^2$ with $S=\{(\pm1,0),(0,\pm1),\pm(1,1)\}$ and two different weigths $\pi_1(\pm1,0)=\pi_1(0,\pm1)=1$, $\pi_1(\pm(1,1))=2$ and $\pi_2(\pm1,0)=\pi_2(0,\pm1)=1$, $\pi_2(\pm(1,1))=\l \in (0,2)$. Then the norm $\|\cdot\|_{S,\pi_1}$ coincides with the usual word metric on the grid so that $\Delta:=\{(\pm1,0),(0,\pm1)\}$ is an egg. However $\Delta$ is no longer an egg for $\|\cdot\|_{S,\pi_2}$ because $\|(n,n)\|_{S,\pi_2}=\l n$ is much smaller than the distance between $e$ and any fixed neighborhood of $(n,n)$ in the usual word norm which is at least~$2n-c$.
\end{example}

\section{The strong contraction method for subexponential growth}\label{sec:contcrit}


The following theorem will be our key tool to obtain upper bounds on growth exponents. The general strategy is due to Grigorchuk \cite{Gri83,Gri85}, the use of weights appeared in \cite{Bar98} and the following precise statement is taken from \cite{MuPa01}. This method is so far the only known method to obtain non-trivial upper bounds on growth exponents.

\begin{theorem}[Muchnik-Pak \cite{MuPa01}]\label{MP}
Given a triple $(G,S,\pi)$, assume that there exist \(\eta \in [0,1]\) and \(c_0>0\) together with embeddings for all $\ell$
\[
B_{G,S,\pi}(\ell) \hookrightarrow \bigcup_{\ell_1+\dots+\ell_d\le \eta \ell+c_0} K \times \prod_{i=1}^d B_{G,S,\pi}(\ell_i)
\]
where \(K\) is a fixed finite set. 
Then there is a constant \(c>0\) such that \(\gamma_{G,S,\pi}(\ell) \le e^{c\ell^\a}\) with \(\a=\frac{\log(d)}{\log(d)-\log(\eta)}\).
\end{theorem}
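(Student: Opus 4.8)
The plan is to iterate the single embedding to obtain a bound on ball sizes by induction on $\ell$. First I would fix a target radius $\ell$, and observe that the embedding
\[
B_{G,S,\pi}(\ell) \hookrightarrow \bigcup_{\ell_1+\dots+\ell_d\le \eta\ell+c_0} K \times \prod_{i=1}^d B_{G,S,\pi}(\ell_i)
\]
immediately yields the inequality
\[
\gamma_{G,S,\pi}(\ell) \le \card{K}\sum_{\ell_1+\dots+\ell_d\le \eta\ell+c_0}\prod_{i=1}^d\gamma_{G,S,\pi}(\ell_i).
\]
The number of tuples $(\ell_1,\dots,\ell_d)$ appearing in the sum is at most polynomial in $\ell$ (if we restrict to integer or lattice-spaced radii, which we may since $\gamma$ is a step function with jumps only at norms of group elements), so the sum is controlled by its largest term times a polynomial factor; and the product $\prod_i\gamma(\ell_i)$ with $\sum_i\ell_i \le \eta\ell+c_0$ is maximized, after taking logarithms and using convexity/superadditivity heuristics, when the mass is spread so that each $\gamma(\ell_i)$ contributes. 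The cleanest route is to guess the answer $\gamma(\ell)\le e^{c\ell^\alpha}$ and verify it closes under the recursion for a suitable $c$.

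Concretely, I would set up a strong induction: assume $\gamma_{G,S,\pi}(m)\le e^{cm^\alpha}$ for all $m<\ell$ (base cases absorbed by enlarging $c$ since balls of bounded radius are finite), and bound
\[
\gamma_{G,S,\pi}(\ell) \le \card{K}\cdot P(\ell)\cdot \max_{\ell_1+\dots+\ell_d\le \eta\ell+c_0}\exp\!\Bigl(c\sum_{i=1}^d \ell_i^\alpha\Bigr),
\]
where $P(\ell)$ is the polynomial bound on the number of tuples. Since $t\mapsto t^\alpha$ is concave for $\alpha\in(0,1)$, the sum $\sum_i\ell_i^\alpha$ under the constraint $\sum_i\ell_i\le \eta\ell+c_0$ is maximized at the balanced point $\ell_i=(\eta\ell+c_0)/d$, giving $\sum_i\ell_i^\alpha \le d\cdot\bigl((\eta\ell+c_0)/d\bigr)^\alpha = d^{1-\alpha}(\eta\ell+c_0)^\alpha$. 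So the exponent is at most $c\,d^{1-\alpha}\eta^\alpha\ell^\alpha + o(\ell^\alpha)$. The induction closes provided $d^{1-\alpha}\eta^\alpha < 1$, i.e. $(1-\alpha)\log d + \alpha\log\eta < 0$, which rearranges exactly to $\alpha > \frac{\log d}{\log d - \log\eta}$; taking $\alpha = \frac{\log d}{\log d-\log\eta}$ gives $d^{1-\alpha}\eta^\alpha = 1$, so one must absorb the lower-order terms ($c_0$ inside the parenthesis, the polynomial $P(\ell)$, and the factor $\card K$) — this is handled by the standard trick of proving the slightly stronger statement $\gamma(\ell)\le e^{c\ell^\alpha - c'\ell^{\alpha'}}$ for some $\alpha'<\alpha$, or equivalently by noting $(\eta\ell+c_0)^\alpha \le \eta^\alpha\ell^\alpha + O(\ell^{\alpha-1})$ and choosing $c$ large enough that the $O(\ell^{\alpha-1})$, $\log P(\ell)$ and $\log\card K$ terms are dominated.

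The main obstacle is exactly this bookkeeping of lower-order terms at the critical exponent: with $d^{1-\alpha}\eta^\alpha=1$ the naive induction only gives $\gamma(\ell)\le \card K\,P(\ell)\,e^{c\ell^\alpha}$, which does not reproduce the hypothesis. The fix — carried out in \cite{MuPa01} and the references therein — is to track a correction term; I would prove by induction the statement $\gamma_{G,S,\pi}(\ell)\le \exp(c\ell^\alpha - c\ell^{\alpha/2})$ (say), check that the subtracted term is strong enough to swallow $\log\card K + \log P(\ell) + c\cdot O(\ell^{\alpha-1})$ for $c$ large and $\ell$ large, and treat small $\ell$ separately using finiteness of balls. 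A secondary technical point is making the "number of tuples is polynomial" claim rigorous: since the relevant radii $\ell_i$ are norms of elements of $G$, hence lie in a discrete subset of $[0,\eta\ell+c_0]$ with a positive minimal gap $\pi_0$ (Proposition \ref{wnorm}), there are at most $O(\ell^d)$ such tuples, which is the polynomial $P(\ell)$. Modulo these routine estimates, the theorem follows by a direct induction on $\ell$.
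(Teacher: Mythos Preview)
The paper does not include a proof of this theorem: it is quoted from Muchnik--Pak \cite{MuPa01} and used as a black box, so there is no argument in the paper to compare against.

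Your sketch is the standard proof and is correct in its main lines. The passage from the embedding to the recursive inequality on $\gamma$, the concavity bound $\sum_i\ell_i^\alpha\le d^{1-\alpha}(\eta\ell+c_0)^\alpha$, the identification of the critical exponent via $d^{1-\alpha}\eta^\alpha=1$, and the diagnosis that the naive induction fails at that exponent because of the polynomial prefactor are all right. The discreteness argument for bounding the number of tuples is also fine (and one can alternatively replace the union by the single set $K\times\{(g_1,\dots,g_d):\sum_i\|g_i\|\le\eta\ell+c_0\}$, avoiding tuples of radii altogether).

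One point to tighten in the correction-term step: the function $t\mapsto ct^\alpha-ct^{\alpha/2}$ is not concave near $0$ (it is convex for small $t$ and concave for large $t$), so the Jensen-type bound on $\sum_i h(\ell_i)$ does not apply directly over the whole simplex. The standard remedy is to split off the small coordinates---say $\ell_i\le\ell_0$---and bound the corresponding factors $\gamma(\ell_i)$ by the constant $\gamma(\ell_0)$, applying the induction hypothesis only to the large coordinates where concavity holds. Once this is done, the key computation is that $d\,h(L/d)\le c\ell^\alpha-c\,d^{1-\alpha/2}\eta^{\alpha/2}\ell^{\alpha/2}+O(\ell^{\alpha-1})$, and since $\beta\mapsto d^{1-\beta}\eta^\beta$ is strictly decreasing with value $1$ at $\beta=\alpha$, the coefficient $d^{1-\alpha/2}\eta^{\alpha/2}$ exceeds $1$, so the strengthened correction absorbs $\log|K|+\log P(\ell)+O(\ell^{\alpha-1})$ for large $\ell$. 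This is exactly the ``routine estimate'' you allude to; it deserves a sentence since without it the induction genuinely does not close.
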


Note that the assumptions need to be satisfied by one weighted generating set and the conclusion then applies to the growth rate of \(G\), hence for all weighted generating set. However in practice, the constants \(\eta,  c_0,c\) and the finite set \(K\) depend on \((S,\pi)\). The key point in the sequel will be to find good generating sets and weights to lower $\eta$ and thus $\alpha$. Of course, this theorem is relevant only in the case $\eta<1$. To apply it, we will systematically use the following:

\begin{proposition}\label{prop-contract}
Let \(G\) be a self-similar group with \(\psi:G \hookrightarrow G \wr_{\XX} \Sym(\XX)\). Assume there is a weighted norm $\|\cdot\|_{S,\pi}$ with an egg~\(\D\) and a constant \(\eta\in [0,1]\) such that
\begin{equation}
  \tag{$\star$}
\forall \d \in \Delta,\quad \sum_{x \in \XX} \|\d_x\|_{S,\pi} \le \eta \|\d\|_{S,\pi}
  \label{eqn:contract}
\end{equation}
where \(\psi(\d)=\wrpr{\d_{x_1},\hdots,\d_{x_d}}{\s}\) is the self-similar image of \(\d\). Then the assumptions of Theorem \ref{MP} are satisfied with the same $\eta$.
\end{proposition}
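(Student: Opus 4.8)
The goal is to build, for every radius $\ell$, an embedding of $B_{G,S,\pi}(\ell)$ into a union of the form $\bigcup_{\ell_1+\dots+\ell_d\le \eta\ell + c_0} K\times\prod_{i=1}^d B_{G,S,\pi}(\ell_i)$, and then invoke \cref{MP}. The plan is to take any $g\in B_{G,S,\pi}(\ell)$, use the egg property of $\D$ to write a minimal representative $g =_G \d_1\cdots\d_n h$ with $\d_i\in\D$ and $h$ in the fixed finite set $F$, and then to push this decomposition through the wreath recursion $\psi$. Applying $\psi$ gives $\psi(g)=\psi(\d_1)\cdots\psi(\d_n)\psi(h)$, and the wreath product multiplication rule rewrites this as $\wrpr{g_{x_1},\dots,g_{x_d}}{\perm{g}}$ where each section $g_{x}$ is a product of sections of the $\d_i$'s (suitably translated by the permutations) and a section of $h$. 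The map sending $g$ to the tuple $(\perm{g}; g_{x_1},\dots,g_{x_d})$ — together with a bounded amount of extra bookkeeping data — will be our embedding; injectivity comes from the fact that $\psi$ is injective, so $g$ is recoverable from $\perm{g}$ and its sections.

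\textbf{Length accounting.} The key estimate is the control of $\sum_x \|g_x\|_{S,\pi}$ in terms of $\|g\|_{S,\pi}$. Using left-invariance of the norm, subadditivity (\cref{wnorm}(a)), and the wreath recursion, we get $\sum_{x\in\XX}\|g_x\|_{S,\pi} \le \sum_{i=1}^n \sum_{x\in\XX}\|(\d_i)_x\|_{S,\pi} + \sum_{x\in\XX}\|h_x\|_{S,\pi}$. By the contraction hypothesis \eqref{eqn:contract}, each inner sum over the $\d_i$ is at most $\eta\|\d_i\|_{S,\pi}$, and since $\d_1\cdots\d_n$ is a minimal representative, $\sum_{i=1}^n\|\d_i\|_{S,\pi} \le \|g\|_{S,\pi} \le \ell$. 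The contribution of $h$ is bounded by a constant, say $c_0$, because $h$ ranges over the finite set $F$ and each section of a fixed group element again lies in a finite set (sections of sections stabilize; even without that, $\sum_x\|h_x\|_{S,\pi}$ is a maximum over the finite set $F$). Hence $\sum_x\|g_x\|_{S,\pi}\le \eta\ell + c_0$. Setting $\ell_i:=\|g_{x_i}\|_{S,\pi}$ we have each $g_{x_i}\in B_{G,S,\pi}(\ell_i)$ with $\sum_i\ell_i\le\eta\ell+c_0$, which is exactly the index constraint in \cref{MP}.

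\textbf{The finite set $K$ and injectivity.} The element $g$ is not determined by its sections alone: we also need $\perm{g}\in\Sym(\XX)$, which contributes a factor of $|\Sym(\XX)|$, a constant. One must be slightly careful that the decomposition $g=\d_1\cdots\d_n h$ is not canonical, but this does not matter: we only need \emph{one} embedding, so fix a choice of such decomposition for each $g$ and define the map accordingly; since $\psi$ is injective and $\psi(g)$ is literally the pair $(\perm{g}; g_{x_1},\dots,g_{x_d})$, the map $g\mapsto(\perm{g}; g_{x_1},\dots,g_{x_d})$ is injective on all of $G$, a fortiori on $B_{G,S,\pi}(\ell)$. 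Thus we may take $K=\Sym(\XX)$ (or a slightly larger finite set absorbing any auxiliary data), giving an embedding $B_{G,S,\pi}(\ell)\hookrightarrow \bigcup_{\ell_1+\dots+\ell_d\le\eta\ell+c_0} K\times\prod_{i=1}^d B_{G,S,\pi}(\ell_i)$ with $d=|\XX|=\card{\XX}$. Applying \cref{MP} yields $\gamma_{G,S,\pi}(\ell)\le e^{c\ell^\alpha}$ with $\alpha=\frac{\log d}{\log d - \log\eta}$, completing the proof.

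\textbf{Main obstacle.} The delicate point is the bound on the $h$-contribution: one needs $\sum_{x\in\XX}\|h_x\|_{S,\pi}$ to be bounded by a constant independent of $\ell$. This is where the precise form of Definition~\ref{defegg} — that $h$ lies in a \emph{fixed} finite set $F$ — is essential, since then its (finitely many) sections have uniformly bounded norm; the rest of the argument is the routine propagation of subadditivity through the wreath recursion.
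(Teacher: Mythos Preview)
Your proposal is correct and follows essentially the same approach as the paper: take $K=\Sym(\XX)$, use the egg decomposition $g=\d_1\cdots\d_n h$, push it through the wreath recursion, and bound $\sum_x\|g_x\|_{S,\pi}\le\eta\sum_i\|\d_i\|_{S,\pi}+c_0\le\eta\ell+c_0$ with $c_0$ coming from the finitely many sections of elements of $F$. The paper is just slightly more explicit than you about the wreath-product computation (writing out $g_x=\bigl(\prod_i \d^i_{\tau_{i-1}^{-1}.x}\bigr)h_{\tau_n^{-1}.x}$ with $\tau_i=\sigma^1\cdots\sigma^i$, which makes transparent why the permutations disappear when you sum over $x$), but your phrase ``suitably translated by the permutations'' captures exactly this.
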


We call the minimal such \(\eta=\eta_{S,\pi}(\D)\) the {\it contraction coefficient} of the egg \(\D\). In order to get upper bounds on growth, we will construct eggs with small contraction coefficient.

\begin{proof}
Take $K=\Sym(\XX)$. The permutational wreath products defines an embedding $G \hookrightarrow \Sym(\XX) \times G^\XX$ by declaring that the image of $g=\wrpr{g_{x_1},\hdots,g_{x_d}}{ \s}$ is the $d+1$-tuple $(\s,g_{x_1},\dots,g_{x_d})$. It is sufficient to prove that the image of each element $g$ in $B_{G,S,\pi}(\ell)$ is in the desired union, i.e. that $\sum_{x\in \XX} \|g_x\|_{S,\pi} \leq \eta \ell+c_0$.

As $\Delta$ is an egg, we can write $g=\d^1\dots\d^nh$ minimal representative with $\d^i \in \Delta$ and $h \in F$ (we use superscripts here only to avoid confusion with subscripts corresponding to sections). Then
\begin{eqnarray*}
g &=&\left( \prod_{i=1}^n\wrpr{\d^i_{x_1},\hdots,\d^i_{x_d}}{ \s^i} \right)\wrpr{h_{x_1}\hdots,h_{x_d}}{\s' }\\
&=&\left(\prod_{i=1}^n \wrprp{\d^i_{x_1},\hdots,\d^i_{x_d}}{\tau_{i-1}}{} \right)\wrprp{h_{x_1},\hdots,h_{x_d}}{\tau_n}{\tau_n\s'}, \quad \textrm{where} \quad \tau_i=\s^1\dots\s^i, \\
&=& \left(\prod_{i=1}^n \wrpr{\d^i_{\tau_{i-1}^{-1}.x_1},\hdots,\d^i_{\tau_{i-1}^{-1}.x_d}}{}\right)\wrpr{h_{\tau_n^{-1}.x_1},\hdots,h_{\tau_n^{-1}.x_d}}{\tau_n\s'} \\
&=& \wrpr{\left(\prod_{i=1}^n \d^i_{\tau_{i-1}^{-1}.x_1}\right)h_{\tau_n^{-1}.x_1},\hdots, \left(\prod_{i=1}^n \d^i_{\tau_{i-1}^{-1}.x_d}\right)h_{\tau_n^{-1}.x_d}}{ \tau_n \s'}.
\end{eqnarray*}
By injectivity of the embedding, we have $g_x=\left(\prod_{i=1}^n \d^i_{\tau_{i-1}^{-1}.x}\right)h_{\tau_n^{-1}.x}$ for each section $x$ and $\s=\tau_n\s'$. Therefore
\begin{eqnarray*}
\sum_{t=1}^d \|g_t\|_{S,\pi}&\le& \sum_{t=1}^d \sum_{i=1}^n \left\|\d^i_{\tau_{i-1}^{-1}.t}\right\|_{S,\pi}+\left\|h_{\tau_n^{-1}.t}\right\|_{S,\pi} \\
&=&\sum_{i=1}^n\sum_{t=1}^d\left\|\d^i_t\right\|_{S,\pi}+\sum_{t=1}^d \left\|h_t\right\|_{S,\pi} \le \sum_{i=1}^n \eta\left\|\d^i\right\|_{S,\pi} +c_0 \le \eta\|g\|_{S,\pi}+c_0\le \eta\ell+c_0.
\end{eqnarray*}
Note that $c_0$ is at most $d$ times the maximal length of a section of an element in $F$.
\end{proof}

\section{Semi-algorithms computing upper bounds}\label{sec:algo}


In order to apply the contraction criterion of the previous section and obtain an upper exponent $\alpha$ as small as possible, we have to find an egg $\Delta$ with minimal coefficient $\eta$ for some norm $\|\cdot\|_{S,\pi}$. More precisely, we want to estimate
\begin{eqnarray}\label{defetabar}
\bar{\eta}(G):=\inf_{(S,\pi)} \inf_{\D} \max_{w\in \D} \bar{\eta}_{S,\pi}(w),\quad \textrm{ where }\quad \bar{\eta}_{S,\pi}(w)= \frac{\sum_{x\in \XX}\|w_x\|_{S,\pi}}{\|w\|_{S,\pi}},
\end{eqnarray}
where the first infimum is over all  weighted generating sets $(S,\pi)$ and  the second is over all eggs $\Delta$ for~$\|\cdot\|_{S,\pi}$. However there is a last technical issue that it is difficult to estimate the actual norm $\|w\|_{S,\pi}$ of a word (or rather of the element represented by this word), and much easier to estimate the length $|w|_\pi$. So in practice we will evaluate
\begin{eqnarray}\label{defeta}
\eta(G):=\inf_{(S,\pi)} \inf_{\D} \max_{w\in \D} \eta_{S,\pi}(w),\quad \textrm{ where }\quad \eta_{S,\pi}(w)= \frac{\sum_{x\in \XX}|w_x|_{\pi}}{|w|_{\pi}},
\end{eqnarray}
This is sufficient because of the
\begin{fact}
$\bar{\eta}(G) \leq \eta(G)$. 
\end{fact}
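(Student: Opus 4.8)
The plan is to prove the inequality termwise and then pass to infima. The two infima in~(\ref{defetabar}) and~(\ref{defeta}) range over exactly the same data: a weighted generating set $(S,\pi)$ together with an egg $\Delta$ for $\|\cdot\|_{S,\pi}$. So I would fix such a pair and aim to show $\bar\eta_{S,\pi}(w)\le\eta_{S,\pi}(w)$ for every $w\in\Delta$; taking the maximum over $w\in\Delta$ and then the infimum over $(S,\pi)$ and $\Delta$ on both sides immediately gives $\bar\eta(G)\le\eta(G)$.

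First I would record a harmless reduction: in both infima one may restrict to eggs all of whose words are nonempty minimal representatives. Indeed, a non-minimal or empty word occurring in an egg $\Delta$ can never appear as a factor of a minimal decomposition $\delta_1\cdots\delta_n h$ --- each such factor is a subword of a minimal word, hence minimal by Proposition~\ref{wnorm}(a) --- so it may be deleted from $\Delta$ without destroying the egg property, and deleting a word only decreases $\max_{w\in\Delta}\bar\eta_{S,\pi}(w)$ and $\max_{w\in\Delta}\eta_{S,\pi}(w)$. After this reduction the quantities $\bar\eta_{S,\pi}(w)$ and $\eta_{S,\pi}(w)$ are visibly well defined, all denominators being positive.

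The comparison itself rests on two simple observations. Since $w\in\Delta$ is now a minimal representative, its two ``lengths'' coincide: $|w|_\pi=\|w\|_{S,\pi}$. And for each $x\in\XX$ the section word $w_x$ --- the word in $S^*$ produced from $w$ by the section formula used in the proof of Proposition~\ref{prop-contract} --- is \emph{some} representative of the section at $x$ of the group element represented by $w$, so $\|w_x\|_{S,\pi}\le|w_x|_\pi$ directly from the definition of the norm. Summing over $x$ and dividing by the common denominator yields
\[
\bar\eta_{S,\pi}(w)=\frac{\sum_{x\in\XX}\|w_x\|_{S,\pi}}{\|w\|_{S,\pi}}\ \le\ \frac{\sum_{x\in\XX}|w_x|_\pi}{|w|_\pi}=\eta_{S,\pi}(w),
\]
as desired.

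The one point that needs care --- and the reason the preliminary reduction is not optional --- is the direction of the inequality in the denominator. For an arbitrary word $w$ one only has $\|w\|_{S,\pi}\le|w|_\pi$, and enlarging the denominator of $\bar\eta_{S,\pi}(w)$ from $\|w\|_{S,\pi}$ to $|w|_\pi$ would weaken the estimate rather than strengthen it; it is precisely minimality of the words in an egg that turns this into an equality and lets the denominators cancel. The numerator bound and the passage to maxima and infima are routine.
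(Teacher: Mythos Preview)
Your argument is correct and follows essentially the same line as the paper's own proof: bound the numerator by $\|w_x\|_{S,\pi}\le|w_x|_\pi$, use minimality of egg words to get equality $\|w\|_{S,\pi}=|w|_\pi$ in the denominator, and discard non-minimal words from the egg to justify this. The only difference is organizational---you perform the pruning to minimal words as a preliminary reduction, whereas the paper treats the non-minimal case as an exception handled inline---but the content is the same.
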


\begin{proof}
Recall that we always have $\|w\|_{S,\pi}\le|w|_\pi$.
For each $w$ in a fixed egg $\D$, we have
\[
\bar{\eta}_{S,\pi}(w)\le \frac{\sum_{x\in \XX}|w_x|_{\pi}}{\|w\|_{S,\pi}}=\frac{\sum_{x\in \XX}|w_x|_{\pi}}{|w|_{\pi}}=\eta_{S,\pi}(w),
\] 
except if $\|w\|_{S,\pi}<|w|_\pi$ which means that $w$ is not minimal for $(S,\pi)$.  But in the latter case we can safely remove $w$ from $\D$ and still obtain an egg $\D'=\D \setminus\{w\}$.
\end{proof}

Finally if we have (\ref{defeta}), we deduce (\ref{defetabar}). Then Theorem \ref{MP} and Proposition \ref{prop-contract} give 
\[
\gamma_G(\ell)\le e^{c\ell^\a}, \quad \textrm{ where } \quad \a=\frac{\log(d)}{\log(d)-\log(\eta(G))} \textrm{ and }d=\#\XX.
\]

Automata groups come naturally with a preferred generating set (in bijection with the stateset). For simplicity, we have chosen to use this particular one and not change it.

\subsection{A semi-algorithm with fixed weights}

Let $(G,S,\pi)$ be fixed. We explain here how to obtain upper bounds on
\[
\eta(G,S,\pi):=\inf_{\D}  \eta_{S,\pi}(\D), \quad \textrm{ where } \quad  \eta_{S,\pi}(\D)=\max_{w\in \D}\eta_{S,\pi}(w).
\]
The first issue is to ensure the existence of an egg with $\eta_{S,\pi}(\D)<1$.

\subsubsection{A procedure to obtain eggs.}
Let $P(w)$ be a property of words in $S$, and assume that we are able to determine algorithmically whether $P(w)$ is true or not for a given $w$. 
We describe a procedure aiming at producing an egg $\D$ of $(G,S,\pi)$  such that $P(w)$ is true for all $w$ in $\D$.

At time one, we set \(L=S\) as a list. (We recall that \(e \notin S\).) At each time, we take the first word \(w\) in the list, we remove it from $L$ and do
\begin{enumerate}[label=(\alph*)]
\item if  \(\prop(w)\) is true, then add \(w\) to the set \(\Delta\),
\item if \(\prop(w)\) is false, then for each \(s \in S\)
\begin{enumerate}[label=(\roman*)]
\item if we detect that \(\|ws\|_{\pi}< \|w\|_{\pi}+\|s\|_{\pi}\), do nothing,
\item elif we detect $u$ in $L$ such that $ws=_Gu$, do nothing,
\item otherwise we add $ws$ at the end of $L$.
\end{enumerate}
\end{enumerate}

The procedure stops once $L$ is empty.

\begin{lemma}\label{lem-procegg}
If the procedure above stops in finite time, then the resulting set $\D$ is an egg of $(G,S,\pi)$ such that $P(w)$ is true for all $w$ in $\D$.
\end{lemma}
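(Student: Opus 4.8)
The goal is to show that the output set $\D$ of the procedure is an egg, i.e. to produce the finite set $F$ of Definition~\ref{defegg} and show every $g\in G$ admits a minimal representative of the form $\d_1\cdots\d_n h$ with $\d_i\in\D$ and $h\in F$. The natural candidate for $F$ is the set of words that were ever placed in the list $L$ but turned out \emph{not} to satisfy $P$ --- call this set $F_0$ --- together with, say, the empty word; since the procedure terminates, only finitely many words enter $L$, so $F_0$ is finite. The plan is to run an induction on the $\pi$-length $\|g\|_{S,\pi}$ (which takes values in a discrete set bounded below by multiples of $\pi_0=\min_s\pi(s)$, so induction is legitimate), peeling off one element of $\D$ at a time exactly as in the proof of Lemma~\ref{lem-bord}.

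\textbf{Key steps, in order.} First I would record the structural invariant of the procedure: every word that is ever enqueued into $L$ is a minimal representative for $(S,\pi)$ of the element it represents. This holds at time one ($L=S$, and generators are minimal --- or if not, they are redundant and can be dropped), and is preserved: in step (b) we only enqueue $ws$ when we fail to detect $\|ws\|_\pi<\|w\|_\pi+\|s\|_\pi$; the intended reading (made precise in the surrounding text about the rewriting system) is that when $ws$ is enqueued it is in fact minimal, because $w$ is minimal by induction and no shortening was found. Second, I would note that the set of words enqueued is ``prefix-generating'' in the following sense: take any $g\in G$ and a minimal representative $w=s_1\cdots s_m$; walk along $w$ letter by letter; since $L$ starts containing $s_1$ and the procedure only stops when $L$ is empty, at some point the procedure processes a word $w'$ which is $=_G$ some prefix $s_1\cdots s_k$ of $w$ (one has to argue, using step (b)(ii) which discards duplicates only when an equal word is already queued, that \emph{some} representative of each reachable element does get processed). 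Third, dichotomy on $w'$: either $P(w')$ is true, so $w'\in\D$; then $(w')^{-1}g$ has $\pi$-norm at most $\|g\|_{S,\pi}-\pi_0$ (here one uses that $w'=_G s_1\cdots s_k$ is a prefix of a minimal word, hence itself minimal and nonempty for $k\ge 1$), and we apply the induction hypothesis to $(w')^{-1}g$, prepending $w'$. Or $P(w')$ is false for \emph{every} such processed prefix-representative; I would argue this forces $g$ itself (or rather a minimal representative of $g$) to have been processed and to lie in $F_0$, giving the base-type case $g=h\in F$.

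\textbf{Main obstacle.} The delicate point is the bookkeeping in step (b): items (i) and (ii) \emph{discard} words without adding them to $\D$ or to the "stop" set, so I must show nothing essential is lost. For (i), a discarded $ws$ is non-minimal, hence never needed as a building block or as an element of $F$. For (ii), $ws$ is discarded only because an equal element $u$ is \emph{currently} in $L$ and will be processed later --- so the equivalence class of $ws$ is still represented downstream; the subtlety is a potential race where $u$ itself later gets discarded via (i) or (ii), and one needs that the chain of "deferrals" terminates, which it does because $L$ is finite and strictly processed. Making this "every $G$-element reachable by a minimal word eventually has a representative dequeued" claim precise and airtight --- essentially a reachability argument in the finite directed graph of enqueued words --- is the crux; once it is in hand, the peeling induction for the egg property is routine and mirrors Lemma~\ref{lem-bord} verbatim. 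I would also explicitly set $F:=F_0\cup(\text{interior words, i.e. those dequeued})$ closed under the (finitely many) section-type operations needed, and remark that finiteness of $F$ is exactly where termination of the procedure is used.
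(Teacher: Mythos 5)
Your proposal follows the same peeling strategy as the paper's proof: starting from a minimal representative $w=s_1\dots s_m$ of $g$, trace a chain of words in $L$ representing the successive prefixes $s_1\dots s_k$, and show that either one of them lands in $\Delta$ (peel, recurse on the shorter suffix) or the chain runs out and $g$ falls into the finite base set $F$. The paper packages this as a proof by contradiction (if no prefix equals a word in $\Delta$, the chain $w_1,w_2,\dots$ gives words of unbounded $\pi$-length that were all in $L$, contradicting termination), whereas you phrase it affirmatively; both are fine, and your choice of $F$ as the set of dequeued words also works since any finite superset of the base cases suffices. Two of your concerns are, however, misplaced. The ``race'' worry about step (b)(ii) does not arise: once a word enters $L$ it is never discarded --- it stays queued until dequeued and either moves to $\Delta$ or has its one-letter extensions examined; only words not yet in $L$ can be discarded by (b)(i)/(b)(ii). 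So the ``deferral chain'' terminates trivially, and the claim ``some representative of each reachable element gets processed'' is neither literally true nor needed --- only the specific chain $w_1,\dots$ built from the prefixes of the fixed minimal word $w$ is required, exactly as the paper constructs it. Your explicit ``minimality of enqueued words'' invariant is a useful way to surface what the paper uses implicitly when it asserts that (b)(i) never fires on $w_k s_{k+1}$, but note that your justification slightly overclaims: the check in (b)(i) is only a sound ``try to detect'' via $\til G$, so the right way to get minimality along the chain is, as in the paper, to pull it from minimality of the ambient prefix $s_1\dots s_k$ of $w$ rather than from the internal check. Finally, the remark about closing $F$ under ``section-type operations'' is extraneous to this lemma --- sections play no role here, they enter only in \cref{prop-contract}.
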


\begin{proof}
Let $g$ be an element of $G$ with a representative $w=s_1\dots s_m$ minimal  for $\|\cdot\|_{S,\pi}$. Arguing by induction, it is sufficient to prove that if $\|g\|_{S,\pi}$ is large enough, there is a prefix of $w$ which is equal in $G$ to a word $\d_1$ of $\D$.

Assume by contradiction that no prefix of $w$ is equal in $G$ to a word in $\D$. Then we claim that for each $k\ge 1$, there exists a word $w_k$ that was in $L$ at some time during the algorithm such that $g=_G w_ks_{k+1}\dots s_m$. 

Indeed, this is obvious for $k=1$. Assume this is true for $k$ and consider $w_ks_{k+1}$. This word is not in $\D$ by assumption, so case (a) does not apply. It is of length $\|w_k\|_{S,\pi}+\|s_{k+1}\|_{S,\pi}$ by minimality of $w$, so case (b)(i) does not apply. Therefore either case (b)(ii) $w_k s_{k+1}$ equals in $G$ to some $u=:w_{k+1}$ that was in $L$ at the same time, or case (b)(iii) $w_ks_{k+1}=:w_{k+1}$ was added to $L$. 

If the algorithm stopped in finite time, there has been finitely many elements in $L$ and we get a contradiction as soon as $|w_k|_{\pi}$ is bigger than all their $\pi$-lengths.
\end{proof}

In fact the lemma still holds if we do not try to detect (b)(i) or (b)(ii). However the resulting egg $\Delta$ (if it exists) would be bigger and the time of computation would be larger. 


\subsubsection{The target semi-algorithm.}\label{sec:com}

Given an automaton group $G$ with a fixed weighted generating system $(S,\pi)$, we use a semi-algorithm, presented in \cref{fig:procegg}, to obtain an upper bound on $\eta(G,S,\pi)$. This semi-algorithm implements the procedure described above with the following property.

For a target coefficient $\eta_\tar \in (0,1]$, we say a word $w$ satisfies property $P_{\eta_\tar}(w)$  if it satisfies condition (\ref{eqn:contract}) of Proposition \ref{prop-contract} for $\eta=\eta_\tar$, i.e. 
\[
\eta_{S,\pi}(w)=\frac{\sum_{x\in \XX}|w_x|_{\pi}}{|w|_{\pi}}\le\eta_\tar
\]
Lemma \ref{lem-procegg} guarantees that if the semi-algorithm stops, it furnishes an egg for $\|\cdot\|_{S,\pi}$ all of which element have property $P_{\eta_\tar}(w)$.

In order to try to detect (b)(i) when $\|ws\|_{S,\pi} <\|w\|_{S,\pi}+\|s\|_{S,\pi}$ we consider an auxiliary group $\til{G}$ of which $G$ is a quotient (so minimality in $G$ implies minimality in $\til{G}$ but not conversely). Then $\|\cdot\|_{\til{G},\til{S}}$ just denotes the usual word norm with respect to $\til{S}$ as in \cref{sec:stabquo}.  In practice, this auxiliary group will be either a free group or a finite free product of free groups.

We also use an auxiliary list $L_{\textrm{aux}}$. All the elements of this auxiliary list have the same $\|\cdot\|_{\til{G},\til{S}}$-length, and the evolution of the size of this list (with length of words) gives a good heuristical information whether the procedure is about to succeed or fail.

Note that according to (b)(ii), we add the word $ws$ to the auxiliary list  $L_{\textrm{aux}}$ only if it represents a new group element. This is what we mean by $\cup_G$. We are aware that this uncontrolled choice of representative word (a priori) impacts the final bound but it would be substantially more difficult to take this into account and seek the best representative.

Finally, the egg obtained obviously satisfies that $\eta_{S,\pi}(w)\le \max_{w\in\D}\eta_{S,\pi}(w)=:\eta_{\max}$ for all $w$ in $\D$, which we can record along the run. We conclude
\[
\eta(G,S,\pi):=\inf_{\D}  \eta_{S,\pi}(\D) \le \eta_{\max}.
\]

This algorithm was implemented in \texttt{GAP} by the function \texttt{IsSubExp\_rec}. It is also performed by \texttt{IsSubExp\_fast} which is faster (it avoids functions calling). When the weights are uniform, the function \texttt{IsSubExp\_fast\_uni} is even faster. The code is given at \texttt{https://www.irif.fr/\textasciitilde godin/automatongrowth.html}. There the list $\Delta$ is called \emph{shell} and the list $L$ is called the \emph{yolk}. The algorithm stops once the yolk list is empty. While running the algorithm, we record their sizes which give a good heuristical indication whether the algorithm is likely to finish or not. 
Moreover in order to avoid endless computation, we input a \emph{radius}, which is the maximal  length (uniform in $S$) of words allowed in the shell list $\Delta$ and the yolk list $L$.

\begin{figure}
  \begin{center}
\begin{algorithm}[H]
    \SetAlgoLined
    \KwData{An automaton group \(G,S,\pi\) and a target coefficient $\eta_\tar \in (0,1]$.}
    \KwResult{An egg \(\D\) and a coefficent $\eta_{\max}\leq \eta_\tar$ such that $\eta_{S,\pi}(w) \le\eta_{\max}$ for all $w\in\D$}
\BlankLine 
    \(L \leftarrow S\) \; $\eta_{\max} \leftarrow 0$\;
 
    \While{ \(L \neq \emptyset \)}{

	    \(L_{\textrm{aux}} \leftarrow \emptyset \)\;
        \For{ \(w \in L\)}{
        \eIf{\(\eta(w)\le \eta_\tar\)}{
 		\(\D \leftarrow \D \cup \{w\}\)\; $\eta_{\max} \leftarrow \max\left(\eta_{\max},\eta_{S,\pi}(w)\right)$\;
            }{
            \For{ \(s \in S\)}{
            	\If{ \(|ws|_{\til{G},\til{S}}= |w|_{\til{G},\til{S}}+|s|_{\til{G},\til{S}}\)}{
            		\(L_{\textrm{aux}} \leftarrow L_{\textrm{aux}} \cup_G \{ws\}\)\;
            	}
            }
        }
        }
        \(L \leftarrow L_{\textrm{aux}} \)\;
    }

\end{algorithm}

  \end{center}
\caption{The target semi-algorithm of Section \ref{sec:com}, implemented as \texttt{IsSubExp\_rec} in \texttt{GAP}.}\label{fig:procegg}
\end{figure}

\subsection{Improving weights for a fixed egg}\label{sec:wmo}

Once we obtain an egg, we wish to modify the weights in order to minimize $\eta_{S,\pi}(\Delta)$. The difficulty is twofold. 
First by construction, the fact that a set is an egg depends a priori strongly on the weights chosen. Second we need to actually minimize a function of the form $\eta_{S,\pi}(\D)=\max_{w\in \D}\eta_{S,\pi}(w)$, where $\eta_{S,\pi}(w)$ is a rational function of $\pi\in (0,1]^S$.

The choice of auxiliary group $\til{G}$ permits to bypass the first difficulty. Indeed, we choose $\til{G}$ to be either a free group or a finite free product of finite groups. In the first case, we impose no restriction on $\pi$ while in the second case we impose that $\pi$ satisfies the triangular conditions (\ref{cond}) of Lemma \ref{minfred}. For such weights in such groups, minimality of representative words is invariant, so the detections (b)(i) when we perform the target semi-algorithm do not depend on the choice of weight. This guarantees that we can change weights freely as long as we respect the triangular conditions.

There remains the second task to evaluate the following function of $\pi$:
\[
\eta(\D)=\inf_{\pi \in \Omega} \max_{w \in \D}\eta_\pi(w), \quad \textrm{ where }\quad \eta_{\pi}(w)= \frac{\sum_{x\in \XX}|w_x|_{\pi}}{|w|_{\pi}}.
\]
The domain $\Omega$ is a subset of $[0,1]^S$ bounded by the triangular conditions (\ref{cond}) and the functions $\eta_\pi(w)$ are rational functions of $\pi$. Indeed for a word $w$ in $S$ and an element $s \in $S, let us denote by $N_s(w)$ the number of occurences of the letter $s$ in $w$. Then $|w|_{\pi}=\sum_{s \in S} N_s(w)\pi(s)$.
It follows that
\[
\eta_{\pi}(w)=\frac{\sum_{s \in S}\sum_{x\in\XX}N_s(w_x) \pi(s)}{\sum_{s \in S} N_s(w)\pi(s)}.
\]
When we run the target semi-algorithm, we register the coefficients $N_s(w)$ and $N_s(w_x)$ for $s\in S$ and $x \in \XX$.

To concretely estimate $\eta_\pi(\D)$, we use a numerical algorithm \texttt{wmo} based on a generalized gradient method. \texttt{wmo} comes from recursive multi-level solution of a second order 
dynamical system with embeded ad-hoc inertia effects to escape from local minima. The algorithm is described in~\cite{Moh07,MoRe09}. In implementation, the overall \texttt{GAP}-program calls for the application \texttt{wmo} providing it with the coefficients above and the triangular constraints. The algorithm \texttt{wmo} returns the estimated minimum together with the estimated optimal weights. 

It seems very challenging to make exact optimization here because the function to minimize is not convex and has several local minima. It also appeared experimentally on some sets of data that very different weights $\pi$ can lead to values extremely close to the observed minimum.

\subsection{Mixed strategies}

The minimization problem (\ref{defeta}) is intrinsically two dimensionnal, since we have to find both an optimal weight $\pi$ and an optimal egg $\Delta$. (Recall that in order to simplify the problem, we chose to fix once and for all the generating set $S$.) The algorithms described previously are essentially one dimensionnal, and they have to be coupled to obtain good results. For this, we implemented three \texttt{GAP}-functions described at \texttt{https://www.irif.fr/\textasciitilde godin/automatongrowth.html}.
\begin{itemize}
\item The function \texttt{IsSubExp\_opt} takes as input a given weight $\pi$ and a target $\eta_\tar$, it searches an egg according to the target semi-algorithm. Once it obtains an egg $\D_1$ it calls \texttt{wmo} in order to find the weights $\pi_1$ optimizing $\eta(\D_1)$. This algorithm can be looped by searching for a new egg $\Delta_2$ using the weights $\pi_1$ for a new target $\eta_{\tar,2}$. 
\item Given an update parameter, the function \texttt{IsSubExp\_ovi} takes as input a given weight $\pi$ and a target $\eta_\tar$, it searches an egg according to the target semi-algorithm, but everytime the yolk list contains words of length multiple of the update parameter, it applies the weight-optimization algorithm \texttt{wmo} to the whole set of words $\Delta \cup L$, and updates the weight vector according to the output. It stops once it reaches the target.
\item The function \texttt{IsSubExp\_loop} permits the user to control the strategy. At each step the function asks for an instruction either of exploration (applying the main loop of the target algorithm of \cref{fig:procegg} to a chosen subset of $\Delta \cup L$) or of  weight optimization (using \texttt{wmo} over a chosen subset of $\Delta \cup L$). 
\end{itemize}

\section{Data and comments}\label{sec:data}


\subsection{Test of implemented functions} We use the first Grigorchuk group in order to test our functions. We consider it together with its usual Mealy automaton, but also with the associated automata of the actions on the second and third levels. In these disguised descriptions, it is much harder to obtain good numerical upper bounds. 

\subsubsection{Test of \texttt{IsSubExp\_rec}}
In view of \cite{Bar98,ErZh18}, we know the optimal weights are $[.305061,.34747,.223839,.123631]$ for $S=\{a,b,c,d\}$ and the optimal contraction coefficients $\eta$ are $\eta_{\textrm{opt}}=.8106$,  $\eta_{\textrm{opt}}^2=.6572$ and $\eta_{\textrm{opt}}^3=.5327$ for  level $1$, level $2$ and level $3$ respectively. 

Indeed, iterating the condition of \cref{MP} ensures that
\[
B_{G,S,\pi}(\ell) \hookrightarrow \bigcup_{\ell_1+\dots+\ell_{d^2}\le \eta^2 \ell+c_1} K^{d+1} \times \prod_{i=1}^{d^2} B_{G,S,\pi}(\ell_i),
\]
so if $\eta$ is valid for the first level, then $\eta^2$ is valid for the second level. By induction $\eta^k$ is valid also for the $k$th level. However, a computationnal issue is the increase of the additive constant $c_1=\eta c_0+(d+1)c_0$. This additive constant is unessential in theory, but in practice it has to be taken care of by an over approximation of $\eta$ in the ratios (\ref{eqn:contract}) of \cref{prop-contract}. This explains why the radius and size of eggs blow up as we tighten our precision on the target. We explored numerically this dependance on the target for the optimal weights in \cref{table:recGri-piopt}.

\begin{table}[h]
\centering
\begin{tabular}{|c|c|c|c|c|c|}
\cline{1-6}
level & target &radius &  egg size &$\eta$ & $\alpha$  
\\ \cline{1-6}
1 & .90 & 2 & 4 & .8106 & .7675 
\\ \cline{1-6}
\multirow{4}{*}{2} & .9 & 8 & 12 & .8857  & .9195
\\ \cline{2-6}
 & .75 & 14 & 73 & .7497  &  .8280
\\ \cline{2-6}
& .70 & 25 & 1360 & .6997  & .7952
\\ \cline{2-6}
& .69 & 33 & 5947 & .6900  & .7889
\\ \cline{2-6}
& .68 & 45 & 93855 & .6800  & .7824
\\ \cline{1-6}
\multirow{7}{*}{3} & .9 & 9 & 32 & .8954  & .9496
\\ \cline{2-6}
 & .80 & 11 & 52 & .7980  &  .9022
\\ \cline{2-6}
& .70 & 17 & 154 & .6999  & .8536
\\ \cline{2-6}
& .65 & 23 & 427 & .6493  & .8281
\\ \cline{2-6}
& .60 & 36 & 3715 & .5999  & .8028
\\ \cline{2-6}
 & .59 & 40 & 7499 & .5900  &  .7977
\\ \cline{2-6}
& .58 & 46 & 19616 & .5800  & .7924

\\ \cline{1-6}

\end{tabular}
	\caption{Testing the function \texttt{IsSubExp\_rec} on the first Grigorchuk group with optimal weights.}\label{table:recGri-piopt}
\end{table}

We also studied the dependance on the weights, using perturbations of the (normalized) optimal weights by vectors $\lambda[1,-1,1,-1]$ for $\lambda=.02$ and $\lambda=.005$. The data are given in \cref{table:recGri-pert}. 

It appears that in some situations the non-optimal weights provide (very slightly) smaller eggs, but as we tighten our target precision, this is no longer the case. For this reason, it will be difficult to detect whether weights are optimal or not.

\begin{table}[h]
\centering
\begin{tabular}{|c|c|c|c|c|c|c|}
\cline{1-7}
weights on first Grigorchuk group&level & target &radius &  egg size &$\eta$ & $\alpha$  
\\ \cline{1-7}
\multirow{7}{*}{$\begin{array}{c}[.325061,.32747,.243839,.103631]\\ \textrm{optimal } \pm .02\end{array}$} &1 & .90 & 2 &  4 &.8719 & .8349  
\\ \cline{2-7}
 &\multirow{2}{*}{2} & .75& 17 &  88 & .7471 & .8263
\\ \cline{3-7}
 & & .70& 89 &  28645 & .7000 & .7954
\\ \cline{2-7}
 &\multirow{4}{*}{3} & .65& 25 &  418 & .6497 & .8283
\\ \cline{3-7}
 & & .60& 40  & 4145   & .6000  & .8028
\\ \cline{3-7}
 & & .59& 53  & 9672   & .5900  & .7977
\\ \cline{3-7}
 & & .58& 69  & 33001   & .5800  & .7925
\\ \cline{1-7}

\multirow{7}{*}{$\begin{array}{c}[.310061,.34247,.228839,.118631]\\ \textrm{optimal } \pm .005\end{array}$} &1 & .90 & 2 &  4 &.8261 & .7840  
\\ \cline{2-7}
 &\multirow{2}{*}{2} & .75& 15 &  76 & .7419 & .8229
\\ \cline{3-7}
 & & .70& 29 &  1595 & .7000 & .7954
\\ \cline{2-7}
 &\multirow{4}{*}{3}& .65& 23 &  427 & .6499 & .8284
\\ \cline{3-7}
 & & .60& 35 &  3576 & .6000 & .8028
\\ \cline{3-7}
 & & .59& 40 &  7477 & .5900 & .7977
\\ \cline{3-7}
 & & .58& 50 &  20744 & .5800 & .7925
\\ \cline{1-7}
\end{tabular}
	\caption{Dependance of radius and size of eggs on the weights -- compare also with \cref{table:recGri-piopt}.}\label{table:recGri-pert}
\end{table}

\subsubsection{Test of \texttt{IsSubExp\_ovi}}

Given a target, this function calls \texttt{wmo} in order to optimize weights once an egg for the initial weights is obtained. We would like to use it in order to find suitable weights for new automata groups where optimal weights are unknown. Data obtained for the Grigorchuk group is described in \cref{table:oviGri}

A first flaw is that the existence of such an egg depends on the weights. For instance for uniform weights on the Grigorchuk group at first or second level, no egg is obtain so the algorithm does not end. The reason is that words of the form $(ab)^k$ are not contracting. 
An easy way to bypass this difficulty is to break the symmetry and use almost uniform weights. However not all symmetry breaking works.

A second flaw appears for instance on level two with almost uniform initial weights. The output weights turn out to be a permutation of the optimal weights and when we plug in these weights with a tighter  target, the algorithm does not end. We reached a local minimum of our optimization problem and this function cannot exit it.

When given aa initial weight close to the optimal value (say $[.30,.35,.20,.10]$), the function outputs a slightly better weight. But as seen on \cref{table:oviGri} the evolution of the weights and contraction coefficient $\eta$ is very slow and it is not clear whether they would often converge to the optimal vector or to unsatisfying local minima.

\begin{table}[h]
\centering
\begin{tabular}{|c|c|c|c|c|c|c|c|c|}
\cline{1-9}
level & loop & target& initial weights  & radius & egg size & $\eta$ & $\alpha $ & output weights
\\ \cline{1-9}

\multirow{3}{*}{1} & 1 &.9999& uniform & \multicolumn{5}{c|}{does not end} 
\\ \cline{2-9}
 & 1& .9999 &  [1.,.99,.98,.97] & 2 & 4 & .8119 & .7689 & [.3067,.3467,.2232,.1236]
\\ \cline{2-9}
 & 1& .9999 &  [1.,.97,.98,.99] & \multicolumn{5}{c|}{does not end} 
\\ \cline{1-9}

\multirow{6}{*}{2} & 1 &.9999& uniform & \multicolumn{5}{c|}{does not end} 
\\ \cline{2-9}
 & 1& .9999 &  [1.,.99,.98,.97] & 5 & 8 & .8107 & .8686  & [.3052,.3474,.1237,.2239]
\\ \cline{2-9}
 & 2& .8106 &  from loop 1 &  \multicolumn{5}{c|}{does not end} 
\\ \cline{2-9}
 & 1& .80 &  [.30,.35,.20,.10] & 10 & 30 & .7901 & .8548 & [.2851,.3575,.2251,.1325]
\\ \cline{2-9}
 & 2& .75 & from loop 1  & 14 & 93 & .7463 & .8258 & [.2866,.3568,.2257,.1311]
\\ \cline{2-9}
 & 3& .70 & from loop 2  & 30  & 1700  & .6999 & .7953 & [.2865,.3568,.2258,.1312]
\\ \cline{1-9}

\multirow{11}{*}{3} & 1 &.9999& uniform & 9 & 32 & .8508 & .9279 & [.3164,.3419,.1710,.1710]
\\ \cline{2-9}
 & 2 &.85& from loop 1 & 10& 36 & .7992 & .9027 & [.3563,.3219,.1917,.1303]
\\ \cline{2-9}

 & 3 & .70 & from loop 2 & 17 & 147 & .6910 & .8491 & [.3720,.3141,.1932,.1209]
\\ \cline{2-9}

& 4 & .65 & from loop 3 & 23 & 361 & .6491 & .8280 & [.3736,.3132,.1934,.1199]
\\ \cline{2-9}

& 5 & .63 & from loop 4 & 30  & 810  & .6281  & .8173 & [.3696,.3153,.1956,.1198]
\\ \cline{2-9}

& 6 & .61 & from loop 5 & 42  & 2155   & .6099   & .8079 & [.3687,.3157,.1968,.1190]
\\ \cline{2-9}
& 7 & .60 & from loop 6 & 48  & 3964   & .6000   & .8028 & [.3682,.3159,.1969,.1191]
\\ \cline{2-9}

& 1 &.7& [.40,.30,.10,.20] & 40 & 315  & .6971  & .8522 &  [.4024,.2988,.1035,.1955]
\\ \cline{2-9}
& 2 &.67& from loop 1 & 166 & 2313 & .6672 & .8371 & [.4097,.2952,.1107,.1846]
\\ \cline{2-9}
& 3 &.66& from loop 2 & 114 & 2636 & .6587 &.8328 & [.4023,.2989,.1178,.1812]
\\ \cline{2-9}
& 4 &.65& from loop 3 & 234 & 4854&.6492 &.8280& [.3998,.3001,.1195,.1808]
\\ \cline{2-9}
 & 1& .70 &  [.30,.35,.20,.10] & 16 & 154 & .6966 & .8519 & [.2943,.3529,.2286,.1244]
\\ \cline{2-9}
 & 2& .65 &  from loop 1 & 23 & 535 & .6497 & .8283 & [.2966,.3517,.2275,.1243]
\\ \cline{1-9}

\end{tabular}
	\caption{Testing the function \texttt{IsSubExp\_ovi} on the first Grigorchuk group.}\label{table:oviGri}
\end{table}

\subsubsection{Test of \texttt{IsSubExp\_opt}}
Given a fixed target, this function looks for an egg, but every update round it applies the optimizer \texttt{wmo} to the whole set of words $\D\cup L$, then it continues using the new weights until next update. Experimental data are given in \cref{table:optGri}.

At the first level, the results and output weights are very close to optimal, but it may be due to the simplicity of the problem. At the second level, they are still close to optimal but the obtained eggs are substantially bigger. Again, the relative simplicity of the problem might account for this success.

At the third level, it is much harder to obtain satisfying output vectors. The best results are obtained with tighter targets and sparser updates, but at the expense of longer computations.

Note that on level $3$, for target $.65$ and update $4$, at round $28$ the use of \texttt{wmo} provided the following vector $[.2959,.3521,.2705,.0817]$, much closer to the optimal vector than the final output -- see \cref{table:optGri}.

\begin{table}[h]
\centering
\begin{tabular}{|c|c|c|c|c|c|c|c|}
\cline{1-8}
level &  target& update  & radius & egg size & $\eta$ & $\alpha $ & output weights 
\\ \cline{1-8}
\multirow{3}{*}{1} & \multirow{3}{*}{.90} & 2   & 2 & 4 & .8113 & .7683 & [.3052,.3475,.2243,.1232]
\\ \cline{3-8}
 & &  4  & 5 &8  & .8107 & .7676 & [.3053,.3474,.2238,.1236]
\\ \cline{3-8}
 & &  10  &  11& 64 & .8136 & .7707 &  [.3118,.3442,.2219,.1224]
\\ \cline{1-8}

\multirow{8}{*}{2} & \multirow{3}{*}{.90} & 2   &  2 & 4 & .8113 & .8690 & [.3100,.3452,.1227,.2224]
\\ \cline{3-8}
 & &  4  & 4 & 8 & .8121 & .8695 &  [.3072,.3465,.1229,.2236]
\\ \cline{3-8}
 & &  10  & 10  &  32 &  .8124 & .8697 &  [.3089,.3456,.1232,.2534]
\\ \cline{2-8}

 & \multirow{3}{*}{.75} & 2   &  16 & 284 & .7467 & .8260 & [.3071,.3465,.1912,.1554]
\\ \cline{3-8}
 & &  4  & 18 & 315 & .7487 &.8273  &  [.3129,.3436,.1922,.1515]
\\ \cline{3-8}
 & &  10  & 24  &  536 &  .7499 & .8281 &  [.3296,.3353,.1822,.1532]
\\ \cline{2-8}

 & \multirow{2}{*}{.72} &  4  & 32  & 805 & .7166 & .8063 &  [.3069,.3466,.2068,.1399]
\\ \cline{3-8}
 & &  10  & 36  & 1199  & .7198  & .8083 &  [.3051,.3475,.2086,.1390]
\\ \cline{1-8}

\multirow{11}{*}{3} & .90 & 2 or 4 or 10   &  9 & 24 & .8889 & .9464 & [.2500,.2500,.2500,.2500]
\\ \cline{2-8}

 & \multirow{4}{*}{.70} & 2   &  23& 269 & .6899 & .8486 & [.3762,.3119,.2223,.0897]
\\ \cline{3-8}
 & & 4   & 21  & 225 & .6985 & .8529 & [.4729,.2636,.1814,.0823]
\\ \cline{3-8}
 & &  10  & 28  &  231 &  .6990 & .8531 &  [.5089,.2456,.2229,.0228]
\\ \cline{3-8}
 & &  15  & 23  & 462  &  .6996 & .8534 &  [.2501,.3750,.2438,.1313]
\\ \cline{2-8}
 & \multirow{4}{*}{.65} & 2   & 40  & 715 & .6430 & .8249 & [.4901,.2550,.1847,.0704]
\\ \cline{3-8}
 & & 4   & 40  & 934 & .6477 & .8273 & [.4789,.2606,.2059,.0548]
\\ \cline{3-8}
 & &  10  & 38  &  670 &  .6487 & .8278 &  [.4597,.2702,.1877,.0826]
\\ \cline{3-8}
 & &  15  & 24  &  961 &  .6499 & .8284 &  [.2795,.3603,.2443,.1161]
\\ \cline{2-8}

 & \multirow{2}{*}{.63} & 10   &60 &1823&.6287& .8176 &[.4004,.2999,.2345,.0654]
\\ \cline{3-8}
 & & 15   & 27 & 1734 & .6300&.8183&[.2795,.3603,.2443,.1161]
\\ \cline{1-8}
\end{tabular}
	\caption{Testing the function \texttt{IsSubExp\_opt} on the first Grigorchuk group. Initial weights are uniform.}\label{table:optGri}
\end{table}

\FloatBarrier

\subsection{Results on new groups}

\subsubsection{The group of \cref{fig:T1} acting on a $6$-letters alphabet}

The function \texttt{IsSubExp\_opt} starting from uniform weights with $\eta_\tar=.67$ and update $10$ yield the vector $[.3352,.1899,.1899,.2849]$. 
With these weights, the function \texttt{IsSubExp\_fast} attained target $\eta=0.645$ with radius $76$ and an egg of size $69978$. So this group has growth exponent at most $\alpha=.8034$.
In view of the difficulty to find optimal weights, it is likely that the actual exponent is substantially smaller. However it would be a surprise if it were smaller than that of the Grigorchuk group.

\subsubsection{The group of \cref{fig:mNote} acting on an $8$-letters alphabet}

Optimizations stronly suggest to use the following weights $[1.,0.,0.]$, i.e. to take only the involutive generator into account. Then the function \texttt{IsSubExp\_fast} attained target $\eta=0.819$ with radius $464$ and an egg of size $2098$. So this group has growth exponent at most $\alpha=.9124$. We believe this bound is close to optimal.

Note that using uniform weights, \texttt{IsSubExp\_fast} attained target $\eta=0.8299$, hence $\alpha=.9178$, with radius $157$ and an egg of size $5690$.

\subsubsection{The group of \cref{fig:Y} acting on a $7$-letters alphabet}

The function \texttt{IsSubExp\_opt} with $\eta_\tar=.9$ and update $10$ returned the weights $[.3115,.2731,.4154]$. 
With this and target $.83$, the function \texttt{IsSubExp\_ovi} yield the weights $[.3238,.2794,.3968]$ and $\eta=.8297$. Then \texttt{IsSubExp\_fast} attained target $\eta=.8200$ with radius $91$ and an egg of size $54727$. So this group has growth exponent at most $\alpha=.9075$.


\subsubsection{The group with $X$-shape Schreier graph, $9$ states and $17$ letters}
This group was defined in \cref{sec:examples}.
The function \texttt{IsSubExp\_fast\_uni} attained target $\eta=0.9286$  with radius $18$ and an egg of size $240039$. So this group has growth exponent at most $\alpha=.9746$. It is likely that the actual growth exponent is much smaller, but it is already challenging to prove intermediate growth. The amount of data is so big that the functions \texttt{IsSubExp\_ovi} and \texttt{IsSubExp\_opt} cannot get to optimization on an ordinary laptop. This is why only uniform weights are used.


\section{Super-polynomial growth}\label{sec:superpoly}


In this section we provide a folklore sufficient criterion that ensures super-polynomial growth of some automata groups. In particular, all the assumptions below are satisfied by the examples presented in \cref{sec:examples}.

 Let $G$ be an automaton group. 

\begin{assumption}\label{hyp1}
There exists $A_1,\dots ,A_k$ finite subgroups of $G$ with pairwise trivial intersection such that
\begin{itemize}
\item either $k\ge 3$ or $k=2$ and $A_1,A_2$ are not both groups of size $2$,
\item the group $G$ is generated by an automaton $\auta$ with stateset $Q=A_1\cup\dots\cup A_k$.
\end{itemize}
\end{assumption}
Under \cref{hyp1}, we take $S=Q\setminus\{e\}$. We denote the self-similar images of the generators~as:
\begin{equation}\label{eq:S}
\forall s \in S, \quad \psi(s)=\wrpr{\sect{s}{x_1},\hdots, \sect{s}{x_d}}{\perm{s}}.
\end{equation}

\begin{assumption}\label{hyp2}
For any word $w$ in $S^*$, and for any section $x \in \XX$, we have
\[
|\sect{w}{x}|_S \leq \frac{|w|_S+1}{2}.
\]
\end{assumption}

Given our automaton $\auta$, we construct an approximating sequence $(G^k)_{k \ge 0}$ as follows (we use superscripts to avoid confusion with sections which are denoted with subscripts). 

Set $G^0=A_1*\dots*A_k$ to be the free product of the finite subgroups  of \cref{hyp1} and $S^0=S$.

For each $k\ge 1$, the group $G^k$ will be a subgroup of $G^{k-1} \wr_\XX \Sym(\XX)$ generated by a set $S^k$ in canonical bijection with $S$. More precisely
\[
\psi^k:G^k=\langle S^k\rangle \hookrightarrow G^{k-1} \wr_\XX \Sym(\XX)
\]
where the generators are given by 
\[
\forall s^k \in S^k, \quad \psi^k(s^k)=\wrpr{\sect{s^{k-1}}{x_1},\hdots, \sect{s^{k-1}}{x_d}}{\perm{s}} \quad \textrm{modelled on (\ref{eq:S})}.
\]
(Here, $S^k, S^{k-1}$ and $S$ are all in canonical bijection as generating sets of their appropriate groups.)

\begin{lemma}\label{lem:rel}
Under Assumptions\ref{hyp1} and \ref{hyp2}, for any word $w$ in $S^*$ of length $|w|_S \le 2^k$, the evaluation $w(S)$ in $G$ is trivial if and only if the evaluation $w(S^k)$ in $G^k$ is trivial.
\end{lemma}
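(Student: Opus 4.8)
The plan is to prove Lemma~\ref{lem:rel} by induction on $k$. The base case $k=0$ is essentially the definition: $G^0=A_1\ast\dots\ast A_k$ is the free product, and $G$ is a quotient of $G^0$ via the canonical map $S^0=S\to S$ extending the identity on the generators. Hence triviality of $w(S^0)$ in $G^0$ implies triviality of $w(S)$ in $G$ for any $w$, which gives one direction. For the converse at level $0$ we need that \emph{short} relations of $G$ already hold in the free product; this should follow from Assumption~\ref{hyp1} (the subgroups $A_i$ have pairwise trivial intersection and together form the stateset) together with the fact that the only words of length $\le 2^0=1$ are single generators, which are trivial in $G$ only if they are trivial in $G^0$. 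So the base case reduces to checking it for length-one words, where it is immediate.

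For the inductive step, assume the statement holds at level $k-1$ and let $w$ be a word in $S^*$ with $|w|_S\le 2^k$. First I would write out the wreath recursion: using the multiplication rule in $G^{k-1}\wr_\XX\Sym(\XX)$ exactly as in the proof of Proposition~\ref{prop-contract}, the element $\psi^k(w(S^k))$ decomposes as $\wrpr{\,(w(S^k))_{x_1},\dots,(w(S^k))_{x_d}\,}{\perm{w}}$, where each section $(w(S^k))_{x}$ is a word in $S^{k-1}$ obtained by concatenating the appropriate sections of the letters of $w$ (reordered by the partial products of the permutations $\perm{s}$), and $\perm{w}$ is the corresponding product in $\Sym(\XX)$. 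The crucial point is that the \emph{same} combinatorial recursion computes the sections $(w(S))_x$ as words in $S$ and the permutation $\perm{w}$ in $G$, because $\psi^k$ is modelled on $\psi$ and $S^k,S^{k-1},S$ are in canonical bijection. Therefore $w(S^k)$ is trivial in $G^k$ if and only if $\perm{w}=\id$ and each section word $(w(S^k))_x$ evaluates trivially in $G^{k-1}$; likewise $w(S)$ is trivial in $G$ if and only if $\perm{w}=\id$ and each $(w(S))_x$ evaluates trivially in $G$. The permutation condition is literally identical on both sides, so it suffices to match the section conditions.

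Now I invoke Assumption~\ref{hyp2}: each section word satisfies $|(w(S))_x|_S\le (|w|_S+1)/2\le (2^k+1)/2$, and since section lengths are integers this gives $|(w(S))_x|_S\le 2^{k-1}$. Hence the inductive hypothesis at level $k-1$ applies to each section word, yielding that $(w(S^{k-1}))_x$ is trivial in $G^{k-1}$ iff $(w(S))_x$ is trivial in $G$. Combining with the equivalence from the previous paragraph closes the induction. I expect the main obstacle to be purely bookkeeping: one must be careful that the section of a \emph{word} (as opposed to a single generator) is exactly the concatenation of the sections of its letters, reindexed according to $\sect{g}{\mot{u}x}=\sect{\sect{g}{\mot{u}}}{x}$ and the twisting by permutations, and that this identity holds verbatim in each $G^k$ because the wreath recursion $\psi^k$ was \emph{defined} to mimic $\psi$ on generators. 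One small subtlety worth spelling out is why the section word $(w(S))_x$ lies in $S^*$ (not merely $Q^*$): this is because each $\sect{s}{x_i}$ is an element of the stateset $Q=A_1\cup\dots\cup A_k$, and after discarding any occurrences of $e$ we land in $S^*$ without changing the evaluated element or increasing the length — this is exactly the normalization implicit in Assumption~\ref{hyp2}. With that observed, the induction goes through cleanly.
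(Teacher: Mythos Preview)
Your argument is correct and is essentially the paper's proof unwound into an induction. The paper iterates the embedding $k$ times in one stroke, landing in $G^0\wr_{\XX^k}\bigl(\Sym(\XX)\wr_\XX\cdots\wr_\XX\Sym(\XX)\bigr)$ and in the analogous iterated wreath product for $G$; it then observes that the $\Sym(\XX)^{\wr k}$ parts coincide by construction, and that the depth-$k$ sections $\sect{w}{\mot u}$ (for $\mot u\in\XX^k$) are the same word in $S^0$ and $S$, of length $\le 1$ by repeated use of Assumption~\ref{hyp2}, so triviality is decided identically in $G^0$ and $G$ by Assumption~\ref{hyp1}. Your induction performs exactly this descent one level at a time: your inductive step matches the permutation and the first-level sections, and your base case is precisely the paper's endpoint observation that a word of length $\le 1$ in $S$ is trivial in $G$ iff it is trivial in $G^0$. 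The bookkeeping you flag (sections of words as concatenations of sections of letters, discarding occurrences of $e$ to stay in $S^*$, and the integrality step $(2^k+1)/2\Rightarrow 2^{k-1}$) is handled correctly.
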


\begin{proof}
To check whether the evaluations are trivial, it is sufficient to check if their images in the iterated wreath products 
\[
G^k \overset{\psi^k\circ\dots\circ\psi^1}{\hookrightarrow} G^0\wr_{\XX^k}\left(\Sym(\XX)\wr_\XX \dots \wr_\XX \Sym(\XX) \right) \textrm{ and } G \overset{\psi^{\circ k}}{\hookrightarrow} G\wr_{\XX^k}\left(\Sym(\XX)\wr_\XX \dots \wr_\XX \Sym(\XX) \right)
\]
are trivial. But by construction of $G^k$, the images in $\Sym(\XX)\wr_\XX \dots \wr_\XX \Sym(\XX)$ are the same. On the other hand for any section $\mot{u} \in \XX^k$, one has $\sect{w(S^k)}{\mot{u}}=\sect{w(S)}{\mot{u}}$ as words in $S^0$ and $S$ respectively. Now Assumption~\ref{hyp2} guarantees that if $|w|_S \le 2^k$, then $|\sect{w}{\mot{u}}|_S \le 1$. By Assumption~\ref{hyp1} and definition of $G^0$, the evaluations of $\sect{w(S^k)}{\mot{u}}$ and $\sect{w(S)}{\mot{u}}$ are either both trivial or both non-trivial. 
\end{proof}

\begin{assumption}\label{hyp3}
The self-similarity map $\psi:G \hookrightarrow G \wr_\XX \Sym(\XX)$ is surjective on the first section, i.e. for all $g_1$ in $G$, there exists $g$ in $G$ such that $\sect{g}{x_1}=g_1$.
\end{assumption}

\begin{proposition}
Assume $G$ satisfies Assumptions \ref{hyp1}, \ref{hyp2}, \ref{hyp3} and (\ref{eqn:contract}) of \cref{prop-contract}, then $G$ has intermediate growth.
\end{proposition}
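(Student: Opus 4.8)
The statement has two independent halves: the upper bound $\gamma_G(\ell)\le e^{c\ell^\alpha}$ with $\alpha<1$, and the lower bound that $\gamma_G$ outgrows every polynomial.

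The upper bound I would read off directly from the machinery already assembled. Hypothesis (\ref{eqn:contract}) is exactly the assumption of \cref{prop-contract}, which returns the hypotheses of \cref{MP} with the same coefficient $\eta$; since $\eta<1$, \cref{MP} yields $\gamma_G(\ell)\le e^{c\ell^\alpha}$ with $\alpha=\log d/(\log d-\log\eta)<1$, where $d=\#\XX$. In particular $\gamma_G$ is subexponential, so all that remains is to rule out polynomial growth.

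For the lower bound I would argue by contradiction, assuming $\gamma_G(\ell)\le C\ell^D$ for all $\ell$. First, $G$ is infinite: if not, then $\mathrm{diam}(G,S)<\infty$, and iterating the surjectivity of $\sect{\cdot}{x_1}$ (\cref{hyp3}) against the halving bound of \cref{hyp2} forces $\mathrm{diam}(G,S)\le 1$, i.e. $G=S\cup\{e\}$ as a set; but then \cref{hyp3} makes $s\mapsto\sect{s}{x_1}$ a permutation of $S$, whereas (\ref{eqn:contract}) forces $\sum_{x\in\XX}\|\sect{s}{x}\|_{S,\pi}<\|s\|_{S,\pi}$ for every $s\in S$, and summing these strict inequalities over $s\in S$ contradicts that permutation. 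Second, the self-similar structure together with \cref{hyp3} gives a subgroup $H=\mathrm{St}_G(x_1)$ of index at most $d$ with a surjective homomorphism $\rho=\sect{\cdot}{x_1}\colon H\twoheadrightarrow G$, and by \cref{hyp2} this homomorphism is strictly contracting, $\|\rho(h)\|_S\le\tfrac12(\|h\|_S+1)$ — a self‑covering of $G$.

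The decisive step is to promote this single self-covering into surjections of finite-index subgroups of $G$ onto direct powers $G^{m}$ with $m$ arbitrarily large (equivalently, to show that $\ker\rho$ is infinite and again maps onto $G$, and then iterate). This is where \cref{hyp1} and \cref{lem:rel} enter: \cref{hyp3} controls only one section, so to populate further branches independently one uses the abundance of torsion generators of \cref{hyp1} to transport the $x_1$-section surjectivity to the other branches, while \cref{lem:rel} — which identifies $G$ with the free product $G^0=A_1\ast\cdots\ast A_k$ up to scale $2^k$, where distinct branches sit in free position — guarantees that the spurious sections created on different branches do not collapse against one another below the relevant scale. Granting this, for each $n$ one gets a finite-index subgroup $H^{(n)}\le G$ and a surjection $H^{(n)}\twoheadrightarrow G^{m(n)}$ with $m(n)\to\infty$; pushing balls through it gives, for fixed $n$, $\gamma_{H^{(n)}}(\ell)\ge\gamma_{G^{m(n)}}(c_n'\ell)\ge\gamma_G(c_n''\ell)^{m(n)}$, which grows at least like $\ell^{m(n)}$ since $G$ is infinite (so $\gamma_G$ is at least linear), whereas $\gamma_{H^{(n)}}(\ell)\le\gamma_G(C_n\ell)\le C_0(C_n\ell)^D$ for all $\ell$. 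Choosing $n$ with $m(n)>D$ and letting $\ell\to\infty$ is then impossible, a contradiction; hence $\gamma_G$ is not polynomial, and with the first half $G$ has intermediate growth.

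The obstacle I expect is precisely this decisive step. A bare self-covering $\mathrm{St}(x_1)\twoheadrightarrow G$ cannot by itself force super-polynomial growth — it is consistent with $\mathbb Z$-like behaviour — so the real work is to extract genuine \emph{independence}: a surjection onto $G^m$ with $m\ge 2$ rather than merely onto $G$ or a thin subdirect subgroup. That is exactly what \cref{hyp1} (enough branch-permuting torsion) and \cref{lem:rel} (free position of branches up to scale $2^k$) are there to supply; keeping the indices $[G:H^{(n)}]$ and the metric distortions finite at each step, and the final ball count, are then routine.
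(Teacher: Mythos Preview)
Your upper-bound paragraph is fine and matches the paper. The lower-bound argument, however, is not a proof: you yourself flag the ``decisive step'' --- producing surjections of finite-index subgroups of $G$ onto $G^{m}$ with $m$ arbitrarily large --- and then write ``Granting this\dots''. That step is exactly the content that is missing, and it does not follow from the stated hypotheses in any routine way. \cref{hyp3} gives surjectivity of a \emph{single} section map $\mathrm{St}(x_1)\twoheadrightarrow G$; it says nothing about simultaneous surjectivity onto two or more factors, which is what you need to go from a self-covering to a surjection onto $G\times G$. Your invocation of \cref{lem:rel} as saying that ``distinct branches sit in free position'' is a misreading: that lemma compares relations in $G$ with relations in the auxiliary group $G^{k}$ up to length $2^{k}$, not the interaction of different sections inside $G$ itself. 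Without a genuine branch-type statement (e.g.\ that some finite-index subgroup contains $K\times\cdots\times K$ for a nontrivial $K$), the polynomial-growth self-covering of $\Z$ shows that the bare data you assemble are compatible with polynomial growth, as you note.

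The paper avoids this branch analysis entirely and takes a much shorter route. Subexponential growth makes $G$ amenable. If $G$ had polynomial growth it would be virtually nilpotent by Gromov, hence finitely presented; then all defining relations have length $\le R$, and \cref{lem:rel} forces these relations to hold in $G^{k}$ for $k$ large, so $G^{k}$ is a quotient of $G$ and therefore amenable. But by \cref{hyp3} the first-section map gives a surjection $G^{k}\twoheadrightarrow G^{k-1}\twoheadrightarrow\cdots\twoheadrightarrow G^{0}=A_{1}\ast\cdots\ast A_{k}$, and under \cref{hyp1} this free product is non-amenable --- contradiction. So the role of \cref{lem:rel} is not to witness branch independence in $G$, but to transfer a hypothetical finite presentation of $G$ to $G^{k}$; and the role of \cref{hyp3} is simply to ensure $G^{0}$ is a quotient of $G^{k}$. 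This argument is complete with the hypotheses as stated, whereas your outline would require an additional structural input (a branch or weakly branch property) that is neither assumed nor derived.
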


The following proof uses the notion of amenability of a group, which we will not define here. The reader can refer to \cite{Bartholdi2017survey} for instance. For our purpose, it is sufficient to know that subexponential growth implies amenability, that amenability passes to quotients and subgroups and that virtually free groups (e.g. the group $G^0$ under Assumption~\ref{hyp1}) are not amenable.

\begin{proof}
Property (\ref{eqn:contract}) of \cref{prop-contract} guarantees subexponential growth. There remains to check super-polynomial growth. By Gromov's theorem, this amounts to prove that $G$ is not virtually nilpotent. As finitely generated nilpotent groups are finitely presented, it is sufficient to prove that $G$ is not finitely presented. Assume the contrary and take a finite presentation of $G$ with respect to the generating set $S$. There exists some $R$ such that all the defining relations have length $\le R$. By \cref{lem:rel} all these relations also hold in $(G^k,S^k)$ for $k$ large enough. Therefore $G^k$ is a quotient of $G$ hence amenable too. However by construction and Assumption~\ref{hyp3}, the group $G^k$ contains a non-trivial free product $G^0=A_1*\dots*A_k$ which is non amenable. This is a contradiction.
\end{proof}

\bibliographystyle{abbrv}
\bibliography{growth}

\textsc{\newline J\'er\'emie Brieussel --- IMAG, Universit\'e de Montpellier 
} --- jeremie.brieussel@umontpellier.fr

\textsc{\newline Thibault Godin --- IMAG, Universit\'e de Montpellier 
} --- thib.godin@gmail.com

\textsc{\newline Bijan Mohammadi --- IMAG, Universit\'e de Montpellier 
} --- bijan.mohammadi@umontpellier.fr

\newpage

\appendix

\section{Figures of automata groups and their Schreier graphs}\label{app:fig}

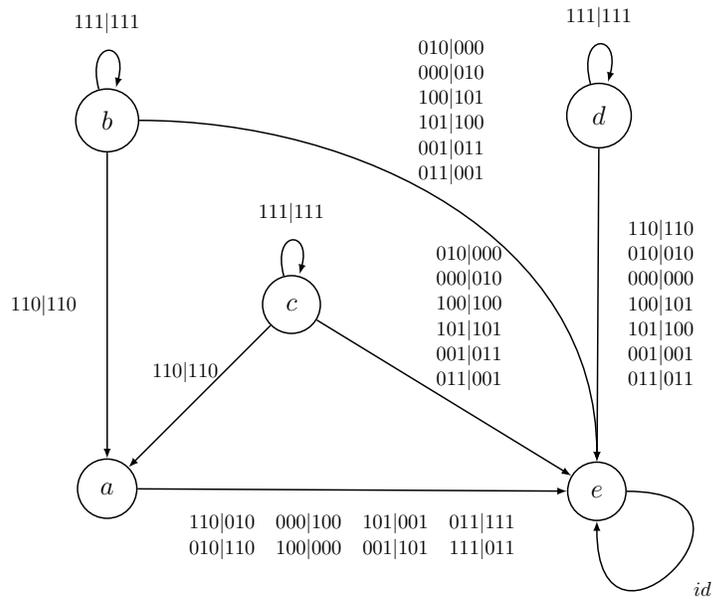
\begin{figure}[h!]

\begin{center}

{
\scalebox{.7}{
{
	\begin{tikzpicture}[->,>=latex,node distance=35mm,thick, inner sep =8pt]

		\node[state, inner sep =8pt] (c)  {\Large $c$};
				\node[state, inner sep =8pt] (a) [left of=c,below of=c] {\Large $a$};
				\node[state, inner sep =8pt] (b) [left of=c, above of=c] {\Large $b$};
				\node[state, inner sep =8pt] (d) [above right= 27.5mm and 50mm of c ] {\Large $d$};
				\node[state, inner sep =8pt] (e) [below right= 27.5mm and 50mm of c ] {\Large $e$};
			\path
				(d) edge[loop above] node[above]{\(\begin{array}{c}  111|111 \end{array}\)}	(d)
				(c) edge[loop above] node[above]{\(\begin{array}{c}  111|111 \end{array}\)}	(c)				
				(b) edge[loop above] node[above]{\(\begin{array}{c}  111|111 \end{array}\)}	(b)				
				(a)	edge	node[below=.1cm]{\(\begin{array}{c} 110|010 \quad 000|100  \quad  101|001 \quad 011|111\\010|110 \quad 100|000  \quad  001|101 \quad 111|011 \end{array} \)}	(e)
				(b) edge 	node[left=.1cm]{\(\begin{array}{c} 110|110 \\ \end{array}\)} (a)
				(c) edge 	node[above=.2cm, pos=0.6]{\(\begin{array}{c} 110|110 \end{array}\)} (a)
				(b) edge[out=00,in=90] 	node[above=.1cm]{\(\begin{array}{c} 010|000 \\000|010 \\ 100|101 \\ 101|100\\ 001|011 \\011|001 \end{array}\)} (e)
				(c) edge 	node[above=.1cm, pos=0.6]{\(\begin{array}{c}  010|000 \\000|010 \\ 100|100 \\ 101|101\\ 001|011  \\011|001\end{array}\)} (e)
				(d) edge 	node[right=.1cm]{\(\begin{array}{c}110|110 \\ 010|010 \\000|000 \\ 100|101 \\ 101|100\\ 001|001 \\011|011\end{array}\)} (e)
				(e) edge[min distance=5mm,out=00,in=270,looseness=10] node[below right =-1mm and -1mm]{\(\begin{array}{c} id \end{array}\)}	(e)
		
		;
	\end{tikzpicture}
	}
	}	
}
\end{center}
\caption{The automata generating the first Grigorchuk group on level 3.}\label{fig:Gri3}
\end{figure}

\begin{figure}[h!]

\begin{center}
{
\scalebox{.8}{
\footnotesize{
	\begin{tikzpicture}[very thick]
			\node[state, inner sep =4pt] (110)  {$110$};
				\node[state, inner sep =4pt] (010) [right = 1.5 cm of 110] {$010$};
				\node[state, inner sep =4pt] (000) [right = 1.5 cm of 010] {$000$};
				\node[state, inner sep =4pt] (100) [right = 1.5 cm of 000] {$100$};
				\node[state, inner sep =4pt] (101) [right = 1.5 cm of 100] {$101$};
				\node[state, inner sep =4pt] (001) [right = 1.5 cm of 101] {$001$};		
				\node[state, inner sep =4pt] (011) [right = 1.5 cm of 001] {$011$};
				\node[state, inner sep =4pt] (111) [right = 1.5 cm of 011] {$111$};							
			\path
				(110)	edge[red,<->]	node[above=.1cm]{\(\)}	(010)			
				(000)	edge[red,<->]	node[above=.1cm]{\(\)}	(100)
				(101)	edge[red,<->]	node[above=.1cm]{\(\)}	(001)									
				(011)	edge[red,<->]	node[above=.1cm]{\(\)}	(111)
				(010)	edge[yellow,bend left,<->]	node[above=.1cm]{\(\)} (000)									
				(100)	edge[yellow,bend left,<->]	node[above=.1cm]{\(\)} (101)			
				(001)	edge[yellow,bend left,<->]	node[above=.1cm]{\(\)} (011)											
				(110)	edge[yellow,loop below,->]	node[below=.1cm]{\(\)} (110)	
				(111)	edge[yellow,loop below,->]	node[below=.1cm]{\(b\)} (111)		
				(010)	edge[cyan,bend right,<->]	node[above=.1cm]{\(\)} (000)									
				(100)	edge[cyan,loop above,->]	node[above=.1cm]{\(\)} (100)
				(101)	edge[cyan,loop above,->]	node[above=.1cm]{\(\)} (101)		
				(001)	edge[cyan,bend right,<->]	node[above=.1cm]{\(\)} (011)											
				(110)	edge[cyan,loop left,->]	node[left=.1cm]{\(a\)} (110)	
				(111)	edge[cyan,loop right,->]	node[right=.1cm]{\(c\)} (111)
				(010)	edge[green,loop above,->]	node[above=.1cm]{\(\)} (010)
				(000)   edge[green,loop above,->]	node[above=.1cm]{\(\)} (000)		
				(100)	edge[green,bend right,<->]	node[above=.1cm]{\(\)} (101)			
				(001)	edge[green,loop above,->]	node[above=.1cm]{\(\)} (001)
				(011)	edge[green,loop above,->]	node[above=.1cm]{\(\)} (011)
				(110)	edge[green,loop above,->]	node[above=.1cm]{\(a\)} (110)	
				(111)	edge[green,loop above,->]	node[above=.1cm]{\(d\)}(111)

		;
	\end{tikzpicture}
	}

	}	
}

\end{center}
\caption{The  Schreier graph on level 3 of the automata generating the first Grigorchuk group.}\label{fig:Gri3}
\end{figure}

\begin{figure}[h!]

\begin{center}

{
\scalebox{1.125}{
\footnotesize{
	\begin{tikzpicture}[->,>=latex,node distance=15mm]

		\node[state] (c)  {$c$};
				\node[state] (a) [left of=c,below of=c] {$a$};
				\node[state] (d) [left of=c, above of=c] {$d$};
				\node[state] (b) [right of=c, above of=c] {$b$};
				\node[state] (e) [right of=c,below of=c] {$e$};
			\path
				(b) edge[loop above] node[above]{\(\begin{array}{c}  1|1 \end{array}\)}	(b)
				(c) edge[loop above] node[above]{\(\begin{array}{c}  1|1 \end{array}\)}	(c)				
				(d) edge[loop above] node[above]{\(\begin{array}{c}  1|1 \end{array}\)}	(d)				
				(a)	edge	node[below=.1cm]{\(\begin{array}{c} 1|2 \quad 2|1  \quad  3|4 \\ 4|3  \quad  5|6  \quad 6|5 \end{array} \)}	(e)
				(d) edge 	node[left=.1cm]{\(\begin{array}{c} 2|2 \\3|3 \\ 4|5 \\ 5|4\\ 6|6 \end{array}\)} (a)
				(c) edge 	node[above=.1cm, pos=0.6]{\(\begin{array}{c} 2|3 \\3|2 \\ 4|4 \\ 5|5\\ 6|6 \end{array}\)} (a)
				(b) edge 	node[right=.1cm]{\(\begin{array}{c} 2|3 \\3|2 \\ 4|5 \\ 5|4\\ 6|6 \end{array}\)} (e)
				(e) edge[min distance=10mm,out=00,in=270,looseness=10] node[below right =-1mm and -1mm]{\(\begin{array}{c} id \end{array}\)}	(e)
		
		;
	\end{tikzpicture}
	}
	}	
}
{
\scalebox{.8}{
\footnotesize{
	\begin{tikzpicture}[very thick]
			\node[state, inner sep =4pt] (110)  {$1$};
				\node[state, inner sep =4pt] (010) [right = 1.5 cm of 110] {$2$};
				\node[state, inner sep =4pt] (000) [right = 1.5 cm of 010] {$3$};
				\node[state, inner sep =4pt] (100) [right = 1.5 cm of 000] {$4$};
				\node[state, inner sep =4pt] (101) [right = 1.5 cm of 100] {$5$};
				\node[state, inner sep =4pt] (001) [right = 1.5 cm of 101] {$6$};		
				
			\path
				(110)	edge[red,<->]	node[above=.1cm]{\(\)}	(010)			
				(000)	edge[red,<->]	node[above=.1cm]{\(\)}	(100)
				(101)	edge[red,<->]	node[above=.1cm]{\(\)}	(001)									

				(010)	edge[yellow,bend left,<->]	node[above=.1cm]{\(\)} (000)									
				(100)	edge[yellow,bend left,<->]	node[above=.1cm]{\(\)} (101)			
				(110)	edge[yellow,loop below,->]	node[below=.1cm]{\(\)} (110)	
				(001)	edge[yellow,loop below,->]	node[below=.1cm]{\(b\)} (001)		
				(010)	edge[cyan,bend right,<->]	node[above=.1cm]{\(\)} (000)									
				(100)	edge[cyan,loop above,->]	node[above=.1cm]{\(\)} (100)
				(101)	edge[cyan,loop above,->]	node[above=.1cm]{\(\)} (101)		
				(110)	edge[cyan,loop left,->]	node[left=.1cm]{\(a\)} (110)	
				(001)	edge[cyan,loop right,->]	node[right=.1cm]{\(c\)} (001)
				(010)	edge[green,loop above,->]	node[above=.1cm]{\(\)} (010)
				(000)   edge[green,loop above,->]	node[above=.1cm]{\(\)} (000)		
				(100)	edge[green,bend right,<->]	node[above=.1cm]{\(\)} (101)			
				(001)	edge[green,loop above,->]	node[above=.1cm]{\(\)} (001)
				(110)	edge[green,loop above,->]	node[above=.1cm]{\(a\)} (110)	
				(001)	edge[green,loop above,->]	node[above=.1cm]{\(d\)}(001)

		;
	\end{tikzpicture}
	}

	}	
}

\end{center}
\caption{A novel automata generating a group of intermediate growth (on the top) and its Schreier graph on level 1 (on the bottom). Notice the similarities with the Grigorchuk group on Fig.~\ref{fig:Mealy}.}\label{fig:T1}
\end{figure}
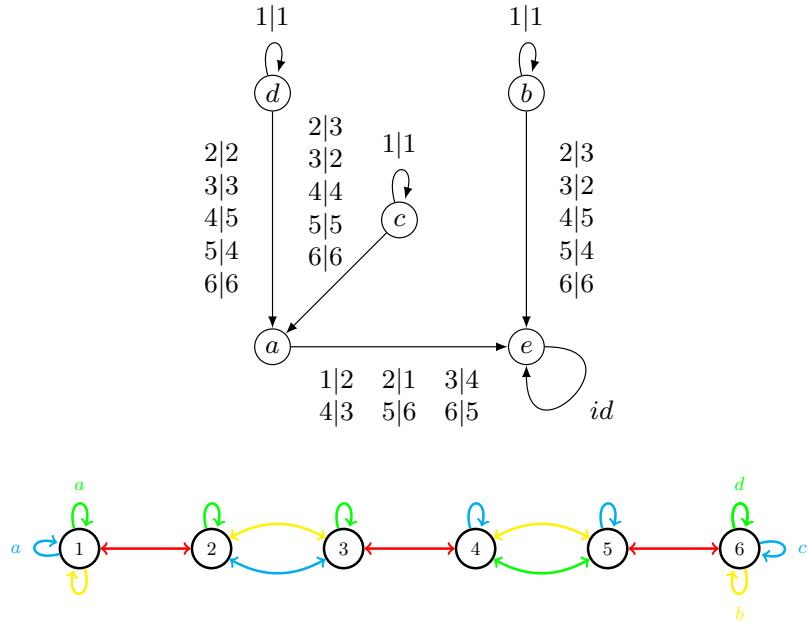

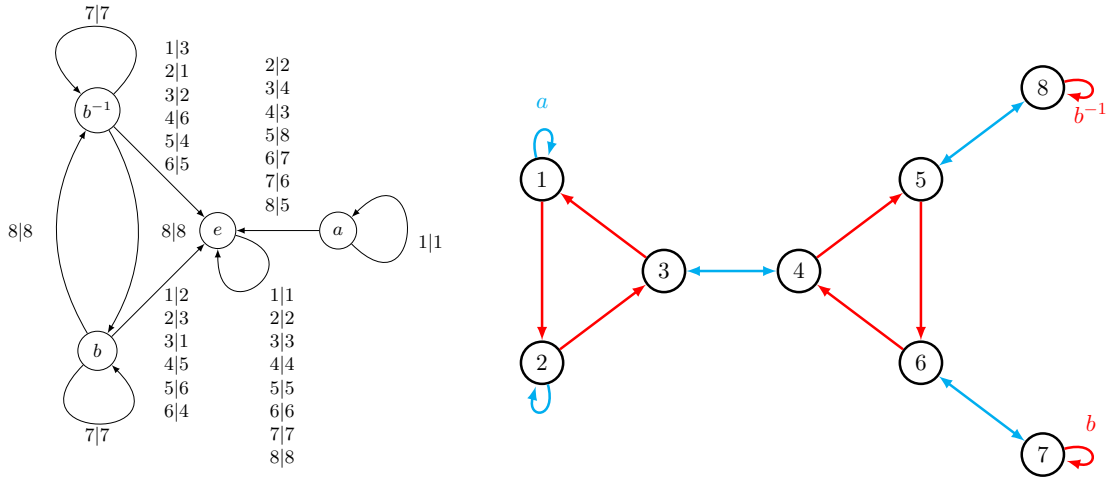
\begin{figure}[p]
\label{fig_automata}
\begin{center}

{
\scalebox{.8}{
\footnotesize{
	\begin{tikzpicture}[->,>=latex,node distance=20mm]

		\node[state, inner sep =4pt] (e)  {$e$};
				\node[state, inner sep =4pt] (b) [left of=c,below of=c] {$b$};
				\node[state, inner sep =2pt] (bi) [left of=c, above of=c] {$b^{-1}$};
				\node[state, inner sep =4pt] (a) [right of=c] {$a$};
			\path
		
				(a)	edge	node[above=.1cm]{\(\begin{array}{c} {2|2} \\ {3|4}\\{4|3}\\{5|8}\\{6|7}\\{7|6}\\{8|5} \end{array}\)}	(e)
				(b) edge 	node[below=.1cm, pos=0.7]{\(\begin{array}{c} {1|2}\\ {2|3} \\ {3|1}\\{4|5}\\{5|6}\\{6|4} \end{array}\)}(e)
				(bi) edge 	node[above=.1cm, pos=0.7]{\(\begin{array}{c} {1|3}\\ {2|1} \\ {3|2}\\{4|6}\\{5|4}\\{6|5} \end{array}\)}(e)

				(a) edge[min distance=10mm,out=-45,in=45,looseness=10] node[below right =-1mm and -1mm]{\(\begin{array}{c} {1|1} \end{array}\)}	(a)
				
				(b) edge[min distance=-10mm,out=225,in=315,looseness=10] node[below =-1mm ]{\(\begin{array}{c} {7|7} \end{array}\)}	(b)	
				(bi) edge[min distance=10mm,out=45,in=135,looseness=10] node[above =-1mm]{\(\begin{array}{c} {7|7} \end{array}\)}	(bi)			
				
				(b) edge[bend left] 	node[left=.1cm]{\(\begin{array}{c} {8|8}\end{array}\)}(bi)
				(bi) edge[bend left] 	node[right=.1cm]{\(\begin{array}{c} {8|8}\end{array}\)}(b)						

				(e) edge[min distance=10mm,out=-15,in=270,looseness=10] node[below right =-1mm and -1mm]{\(\begin{array}{c} {1|1}\\{2|2} \\ {3|3}\\{4|4}\\{5|5}\\{6|6}\\{7|7}\\{8|8} \end{array}\)}	(e)
		
		;
	\end{tikzpicture}
	}
	}	
}
{
\scalebox{.8}{
{
	\begin{tikzpicture}[->,>=latex,node distance=30mm, very thick]
		\node[state, inner sep =4pt] (3)  {$3$};
				\node[state, inner sep =4pt] (1) [above left = 1 cm and 1.5 cm of 3] {$1$};
				\node[state, inner sep =4pt] (2) [below left =  1 cm and 1.5 cm of 3] {$2$};
				\node[state, inner sep =4pt] (4) [right = 1.5 cm of 3] {$4$};
				\node[state, inner sep =4pt] (5) [above right =  1 cm and 1.5 cm of 4] {$5$};
				\node[state, inner sep =4pt] (6) [below right= 1 cm and 1.5 cm  of 4] {$6$};		
				\node[state, inner sep =4pt] (7) [below right= 1 cm and 1.5 cm  of 6] {$7$};
				\node[state, inner sep =4pt] (8) [above right=  1 cm and 1.5 cm of 5] {$8$};							
			\path
			
				(1)	edge[loop above,cyan]	node[above=.1cm]{\(\begin{array}{c} a \end{array}\)}	(1)			
			
				(2)	edge[loop below,cyan]	node[above=.1cm]{\(\begin{array}{c}  \end{array}\)}	(2)			
				
				(3)	edge[cyan,<->]	node[above=.1cm]{\(\begin{array}{c}  \end{array}\)}	(4)			
				(5)	edge[cyan,<->]	node[above=.1cm]{\(\begin{array}{c}  \end{array}\)}	(8)			
				(6)	edge[cyan,<->]	node[above=.1cm]{\(\begin{array}{c}  \end{array}\)}	(7)			
							
				(1)	edge[red]	node[left=.1cm]{\(\begin{array}{c}  \end{array}\)}	(2)
				(2)	edge[red]	node[below=.1cm]{\(\begin{array}{c}  \end{array}\)}	(3)
				(3)	edge[red]	node[above=.1cm]{\(\begin{array}{c}\end{array}\)}	(1)		
				
				(4)	edge[red]	node[right=.1cm]{\(\begin{array}{c}   \end{array}\)}	(5)
				(5)	edge[red]	node[below=.1cm]{\(\begin{array}{c}   \end{array}\)}	(6)
				(6)	edge[red]	node[above=.1cm]{\(\begin{array}{c}  \end{array}\)}	(4)			
				
				(7)	edge[loop right,red]	node[above=.1cm]{\(\begin{array}{c} b \end{array}\)}	(7)																			
				(8)	edge[loop right,red]	node[below=.1cm]{\(\begin{array}{c} b^{-1} \end{array}\)}	(8)									
		;
	\end{tikzpicture}
	}

	}	
}

\end{center}
\caption{The automaton~\({a=\wrpr{a,e,e,e,e,e,e,e}{(3,4)(5,8)(6,7)}} \:;\: {b=\wrpr{e,e,e,e,e,e,b,b^{-1}}{(1,2,3)(4,5,6)}}\) (on the left) from~\cite{Bri17} and  its  Schreier graph on level 1 (on the right).\label{fig:mNote}}
\end{figure}

\begin{figure}[p]

\begin{center}

{
\scalebox{1.25}{
\footnotesize{
	\begin{tikzpicture}[->,>=latex,node distance=15mm]

				\node[state] (c)  {$c$};
				\node[state] (a) [below left = .5cm and 1.5cm of c] {$a$};
				\node[state] (b) [below right = .5cm and 1.5cm of c] {$b$};
				\node[state] (e) [below = 3cm of c] {$e$};
			\path
				(a) edge[loop above] node[above]{\footnotesize \(\begin{array}{c}  1|1 \\ 2|2 \end{array}\)}	(a)			
				(b) edge[loop above] node[above]{\footnotesize \(\begin{array}{c}  5|5 \end{array}\)}	(b)
				(c) edge[loop right] node[above]{\footnotesize \(\begin{array}{c}  7|7 \end{array}\)}	(c)				
				(a)	edge	node[below left=-.2cm  and .3cm]{\footnotesize\(\begin{array}{c}  3|4 \\ 4|3  \\  5|5  \\ 6|7 \\ 7|6 \end{array} \)}	(e)
				(c) edge 	node[left=.1cm, pos=0.2]{\footnotesize\(\begin{array}{c} 1|2\\2|1 \\3|6 \\ 4|4 \\ 6|3 \end{array}\)} (e)
				(b) edge 	node[below right=-.2cm  and .3cm]{\footnotesize \(\begin{array}{c} 1|1\\2|3 \\3|2 \\ 4|5 \\ 5|4\\ 6|6 \end{array}\)} (e)
				(e) edge[min distance=10mm,out=-60,in=240,looseness=10] node[below =-1mm]{\(\begin{array}{c}id \end{array}\)}	(e)
		
		;
	\end{tikzpicture}
	}
	}	
}
{
\scalebox{1}{
\footnotesize{
	\begin{tikzpicture}[very thick]
				\node[state] (3)  {$3$};
				\node[state] (2) [above = 1cm of 3] {$2$};
				\node[state] (1) [above = 1cm of 2] {$1$};
				\node[state] (4) [below left = of c] {$4$};
				\node[state] (5) [below left = of 4] {$5$};
				\node[state] (6) [below right = of 3] {$6$};
				\node[state] (7) [below right = of 6] {$7$};				
				
			\path
				(3)	edge[red,<->]	node[above=.1cm]{\(\)}	(4)			
				(6)	edge[red,<->]	node[above=.1cm]{\(\)}	(7)
				(1)	edge[red,loop right,->]	node[right=.1cm]{\(a\)}	(1)									
				(2)	edge[red,loop right,->]	node[right=.1cm]{\(a\)}	(2)		
				(5)	edge[red,loop right,->]	node[above=.1cm]{\(\)}	(5)																				
				(3)	edge[yellow,<->]	node[above=.1cm]{\(\)}	(6)			
				(1)	edge[yellow,<->]	node[above=.1cm]{\(\)}	(2)
				(4)	edge[yellow,loop above,->]	node[above=.1cm]{\(\)}	(4)	
				(5)	edge[yellow,loop above,->]	node[above=.1cm]{\(b\)} (5)																					
				(7)	edge[yellow,loop above,->]	node[above=.1cm]{\(\)}	(7)						

				(3)	edge[cyan,<->]	node[above=.1cm]{\(\)}	(2)			
				(4)	edge[cyan,<->]	node[above=.1cm]{\(\)}	(5)
				(1)	edge[cyan,loop left,->]	node[above=.1cm]{\(\)}	(1)	
				(6)	edge[cyan,loop left,->]	node[above=.1cm]{\(\)} (6)																					
				(7)	edge[cyan,loop left,->]	node[left=.1cm]{\(c\)}	(7)

		;
	\end{tikzpicture}
	}

	}	
}

\end{center}
\caption{A novel automata generating a group of intermediate growth (on the left) and its Schreier graph on level 1 (on the right).}\label{fig:Y}
\end{figure}
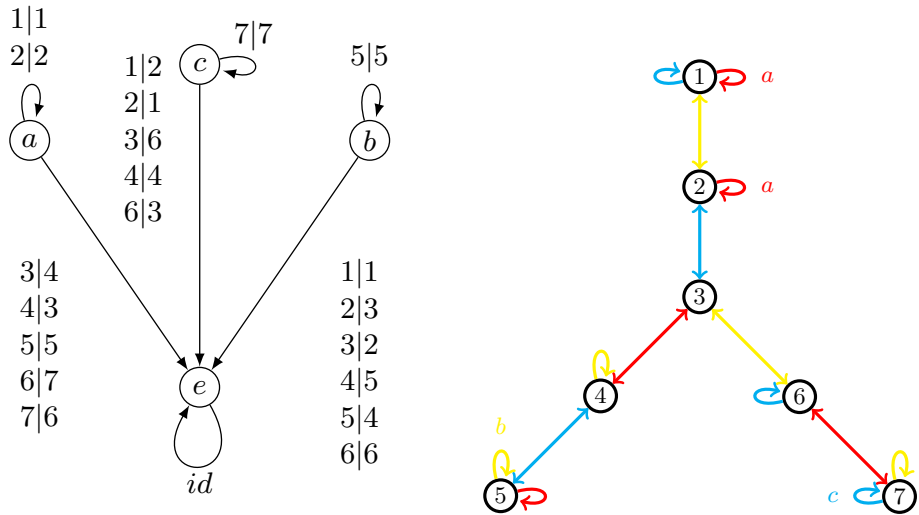
\end{document}